\theoremstyle{plain} 
\newtheorem{thm}{Theorem}[section]
\newtheorem{prop}[thm]{Proposition}
\theoremstyle{definition} 
\newtheorem{Def}[thm]{Definition} 
\newtheorem{lem}[thm]{Lemma}
 \newtheorem*{conc}{Conclusion}
\theoremstyle{remark} 
\newtheorem*{rmk}{Remark}
\makeatletter \renewenvironment{proof}[1][\proofname] {\par\pushQED{\qed}\normalfont\topsep6\p@\@plus6\p@\relax\trivlist\item[\hskip\labelsep\bfseries#1\@addpunct{.}]\ignorespaces}{\popQED\endtrivlist\@endpefalse} \makeatother
\begin{document}
\title[A study of topological structures on equi-continuous mappings]{A study of topological structures on equi-continuous mappings}
\author{Ankit Gupta$^1$ and Ratna Dev Sarma$^2$}\address{$^1$Department of Mathematics, University of Delhi, Delhi 110007, India. \\ Email : ankitsince1988@yahoo.co.in
\newline $^2$Department of Mathematics, Rajdhani College $($University of Delhi$)$, Delhi 110015, India.\\ Email : ratna\_sarma@yahoo.com}

\subjclass[2010]{\,54C35; 54A05}\keywords{\,topology; uniform space; function Space; equi-continuous mappings; net convergence}
\thanks{Corresponding author $:$ Ankit Gupta\\
This paper was prepared during a sabbatical leave of the second author.}
\date{}

\begin{abstract}
Function space topologies are developed for $EC(Y,Z)$, the class of equi-continuous mappings from a topological space $Y$ to a uniform space $Z$. Properties such as splittingness, admissibility etc. are defined for such spaces. The net theoretic investigations are carried out to provide characterizations of splittingness and admissibility of function spaces on $EC(Y,Z)$. The open-entourage topology and point-transitive-entourage topology are shown to be admissible and splitting respectively. Dual topologies are defined. A topology on $EC(Y,Z)$ is found to be admissible (resp. splitting) if and only if its dual is so.
\end{abstract}
\maketitle
\thispagestyle{empty}
\section{Introduction}
Investigations of topological aspects of the collections of continuous mappings from a topological space $Y$ to another topological space $Z$ has been an area of active research in topology. Intrinsic properties of  function space topologies have also been investigated in depth by several researchers. The relationship between convergence and topologies of $\mathcal{C}(X,\mathbb{R})$ and that of the hyperspaces $\mathcal{C}(X,\$)$ of open subsets of $X$ has been studied in \cite{u}. Dual topologies for function space topologies and existence of a greatest splitting topology have been investigated in \cite{g04} and \cite{g08} respectively. Conditions under which compact-open, Isbell or natural topologies etc. on $\mathcal{C}(X, \mathbb{R})$ may coincide have been explored in \cite{J}.  In the recent years, several research papers have come up dealing with certain particular as well as some more general cases of this study. For example, for the particular case $Z = \mathbb{R}$, bounded-open topology and pseudo-compact-open topologies are discussed in \cite{B} and \cite{BB}. In \cite{j} and \cite{jj}, function space topologies arising from strong uniform continuity have been studied. On the other hand in \cite{A} and \cite{AA}, topologies on $Y$ and $Z$ are replaced by fuzzy topologies, which provide a more general set up for topological properties. Similarly, function space topologies for generalized topological spaces have been discussed in \cite{ank}.  In this present paper, we investigate the same for equi-continuous mappings from $Y$ to $Z$, where $Y$ has a topology while $Z$ is equipped with a uniformity. With the help of examples, we have shown that several such topologies do exist really on $EC(Y,Z)$, the collection of equi-continuous mappings from $Y$ to $Z$. As the uniform spaces are positioned between the metric spaces and the topological spaces, there is a tendency to discount their investigations as particular cases of topology. However, through our study, we have shown here that uniform structures and in particular, the equi-continuous mappings need not to be studied from that point of view. Rather the inherent aesthetics and intricacies, arising out of uniformities are best revealed, when the related notions are studied directly, not as by product of topology. In fact, we have also introduced function space topology for the family of $_pEC(Y,Z)$ of pseudo-dislocated equi-continuous mappings. In this case, $Z$ has pseudo-dislocated uniformity, which unlike uniformity, does not generate any topology.
%

We have introduced admissibility and splittingness  for $EC(Y,Z)-$ two important features for any function space topology. Using net-theory, we have developed the concept of equi-continuous convergence of nets of equi-continuous functions. Splittingness and admissibility are characterized using the notion of equi-continuous convergence.
These characterizations are used to prove that open-entourage topology on $EC(Y,Z)$ is admissible whereas point-transitive-entourage topology is splitting. In the last section, we have introduced the concept of dual topology on $\mathcal{O}_Z(Y)$, the collection of open sets of $Y$ obtained in relation to the equi-continuous mappings. Interesting relationships are observed between the topologies on $\mathcal{O}_Z(Y)$ and that of $EC(Y,Z)$. For example, a topology on $EC(Y,Z)$ is admissible (resp. splitting) if and only if its dual on $\mathcal{O}_Z(Y)$ is admissible (resp. splitting). Similarly, a topology on $\mathcal{O}_Z(Y)$ is admissible (resp. splitting) if and only if its dual on $EC(Y,Z)$ is so.
\section{Equi-Continuity, Pseudo-dislocated equi-continuity and Convergence}
In this section, we develop the net convergence criterion for equi-continuousas well as pseudo-dislocated-equi-continuity mappings.
\begin{Def}\label{Def:1}
A \textit{uniform structure} or \textit{uniformity} on a non-empty set $X$ is a family $\mathcal{U}$ of subsets of $X \times X$ satisfying following properties:
\begin{enumerate}[$(\mathcal{U}_1)$]
\item if $U \in \mathcal{U}$, then $\Delta X \in U$;\\
here $\Delta X= \{(x,x)\in X \times X$ for all $ x\in X\}$;
\item if $U \in \mathcal{U}$, then $U^{-1} \in \mathcal{U}$;\\
here, $U^{-1}$ is called \textit{inverse relation}  of $U$ and defined as 
\begin{center}
$U^{-1}=\{(x,y)\in X\times X\,|\,(y,x)\in U\}$
\end{center}
\item if $U \in \mathcal{U}$, then there exists some $V \in \mathcal{U}$ such that $V \circ V \subseteq U$;\\
here the composition $U \circ V =\{(x,z)\in X \times X\,|\, $ for some $y \in X$, $(x,y)\in V$ and $(y,z)\in U\}$.
\item if $U,V \in \mathcal{U}$, then $U \cap V \in \mathcal{U}$;
\item if $U \in \mathcal{U}$ and $U\subseteq V \subseteq X \times X$, then $V \in \mathcal{U}$.
\end{enumerate}
\end{Def}
The pair $(X, \mathcal{U})$ is a \textit{uniform space}\index{uniform space} and the members of $\mathcal{U}$ are called \textit{entourages}.
\begin{Def}
Let $(X, \tau)$ and $(Y, \mathcal{U})$  be a topological space and a uniform space respectively. A function $f\,:(X,\tau) \to (Y, \mathcal{U})$  is said to be \textit{equi-continuous} at $x \in X$, if for each entourage $U \in \mathcal{U}$, there exists an open neighbourhood $V$ of $x$ such that $f(V) \subseteq U[f(x)]$, where $U[f(x)] = \{y \in Y\,|\,((f(x),y))\in U\}$.
\end{Def}
If $f$ is equi-continuous for all $x \in X$, then $f$ is called \textit{equi-continuous}.\\
The collection of all equi-continuous functions from $X$ to $Y$ is denoted by $EC(X,Y)$  respectively.

\begin{Def}
Let $\{x_n\}_{n \in D}$ be a net in a uniform space $(X, \mathcal{U})$ . Then $\{x_n\}_{n \in D}$ is said to be \textit{convergent} to $x\in X$, if for each entourage $U \in \mathcal{U}$, there exists an $m \in D$, such that $(x, x_n)\in U$ for all $n \geq m$.
\end{Def}
In our next theorem, we provide the net convergence criteria for equi-continuous functions.
\begin{thm}
Let $(X, \tau)$ and $(Y, \mathcal{U})$ be a topological space and a uniform space respectively.  Then a function $f\,:(X,\tau) \to (Y, \mathcal{U})$ is equi-continuous at $x \in X$ if and only if whenever a net $\{x_n\}_{n \in D}$ converges to $x$ in $X$, its  image net $\{f(x_n)\}_{n \in D}$ converges to $f(x)$ in $Y$.
\end{thm}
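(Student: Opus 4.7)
The plan is to prove this by the standard two-direction argument mirroring the usual topological characterization of continuity by nets, but with entourage neighborhoods $U[f(x)]$ playing the role of open neighborhoods in the target.

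For the forward direction, I would assume $f$ is equi-continuous at $x$ and that $\{x_n\}_{n\in D}$ converges to $x$. Given an arbitrary entourage $U \in \mathcal{U}$, equi-continuity furnishes an open neighborhood $V$ of $x$ with $f(V) \subseteq U[f(x)]$. Since $\{x_n\} \to x$ in $(X,\tau)$, there is some $m \in D$ such that $x_n \in V$ for all $n \geq m$, and hence $(f(x), f(x_n)) \in U$ for all $n \geq m$. By the definition of convergence in a uniform space, this gives $\{f(x_n)\} \to f(x)$.

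For the converse, I would argue by contrapositive. Assume $f$ is not equi-continuous at $x$. Then there exists an entourage $U \in \mathcal{U}$ such that for every open neighborhood $V$ of $x$ one can pick some $x_V \in V$ with $(f(x), f(x_V)) \notin U$. The key construction is to directize the collection $\mathcal{N}_x$ of open neighborhoods of $x$ by reverse inclusion: set $V_1 \geq V_2$ iff $V_1 \subseteq V_2$. This is a directed set, and the net $\{x_V\}_{V \in \mathcal{N}_x}$ converges to $x$ since for any open neighborhood $W$ of $x$ and any $V \geq W$, we have $x_V \in V \subseteq W$. By the hypothesis, $\{f(x_V)\} \to f(x)$, so there must exist $V_0$ with $(f(x), f(x_V)) \in U$ for all $V \geq V_0$; this contradicts the choice of $x_{V_0}$.

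I do not foresee a serious obstacle here; the argument is the uniform-space analogue of the classical $(\varepsilon,\delta)$-versus-sequential/net characterization. The only subtlety is to work with the neighborhood filter of $x$ as the indexing directed set, which handles the fact that topological spaces in general are not first-countable and so nets (rather than sequences) are required for the converse.
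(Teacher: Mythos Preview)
Your proposal is correct and matches the paper's own proof essentially step for step: both directions use the same ideas, and in particular the converse in the paper also proceeds by choosing, for a bad entourage $U$, a point $x_V \in V$ with $f(x_V)\notin U[f(x)]$ for each open neighborhood $V$ of $x$, directing the neighborhood system by reverse inclusion, and deriving a contradiction from convergence of $\{x_V\}$ to $x$.
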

\begin{proof}
Let $\{x_n\}_{n \in D}$ be any convergent net in $X$, which converges to $x \in X$ and let $f\,:X \to Y$ be  equi-continuous at $x \in X$. We have to show that the net $\{f(x_n)\}_{n \in D}$ converges to $f(x)$ in $Y$. Let $U \in \mathcal{U}$ be any entourage. Since $f$ is equi-continuous at $x \in X$, therefore there exists an open neighbourhood $V$ of $x$ such that $f(V) \subseteq U[f(x)]$. Since the net $\{x_n\}_{n \in D}$ converges to $x$,  $x_n \in V$ eventually. Hence $f(x_n)\in f(V)\subseteq U[f(x)]$ eventually which implies that $(f(x),f(x_n)) \in U$ eventually. Therefore the image net $\{f(x_n)\}_{n \in D}$ converges to $f(x)$ in $Y$.\\
Conversely, let the hypothesis hold. Let if possible $f$ be not equi-continuous at $x \in X$. Then there exists an entourage $U \in \mathcal{U}$ such that there is no open neighbourhood $V$ of $x \in X$ such that $f(V) \subseteq U[f(x)]$. That is, for each open neighbourhood $V$ of $x$, there exists some $x_V \in V$ such that $f(x_V)\notin U[f(x)]$. Let $D$ be a collection of all open neighbourhoods of $x$. Then $(D,\ge)$ is a directed set under the inverse set inclusion $\ge$, that is, $V \ge U$ if $V \subseteq U$. Then $\{x_V\}_{V \in D}$ is a net in $X$ which converges to $x$. But the image net $\{f(x_V)\}_{V \in D}$ does not converge to $f(x)$, because for $U \in \mathcal{U}$, we have $f(x_V)\notin U[f(x)]$ for all $V \in D $. Thus we get a contradiction. Therefore $f$ is equi-continuous at $x \in X$.   
\end{proof}
Next we provide few results regarding the pseudo-dislocated uniform space and pseudo-dislocated equi-continuous mappings. The importance of these spaces lies in the fact that they do not generate any topology like the uniform spaces do.
\begin{Def}\cite{kumari16}
A \textit{pseudo-dislocated uniformity} on a non-empty set $X$ associated with a subset $A$ of $X$ is a family $\mathcal{U}_A$ of subsets of $X \times X$ which satisfies $(\mathcal{U}_2), (\mathcal{U}_3), (\mathcal{U}_4), (\mathcal{U}_5)$ of Definition \ref{Def:1} together with the following property:
($\mathcal{U}_1')\;$ Every member of $\mathcal{U}_A$ contains $\Delta_A = \{(x,x)\,|\,x \in A\}$. 
\end{Def}
The pair $(X, \mathcal{U}_A)$ is called \textit{pseudo-dislocated uniform space}.
\begin{Def}
Let $(X, \tau)$ and $(Y, \mathcal{U}_A)$  be a topological space and a pseudo-dislocated uniform space respectively. A function $f\,:(X,\tau) \to (Y, \mathcal{U}_A)$  is said to be \textit{pseudo-dislocated equi-continuous} at $x \in X$, if for each entourage $U \in \mathcal{U}_A$, there exists an open neighbourhood $V$ of $x$ such that $f(V) \subseteq U[f(x)]$, where $U[f(x)] = \{y \in Y\,|\,((f(x),y))\in U\}$.
\end{Def}
If $f$ is pseudo-dislocated equi-continuous for all $x \in X$, then $f$ is called \textit{pseudo-dislocated equi-continuous} and the collection of all pseudo-dislocated equi-continuous functions from $X$ to $Y$ is denoted by $_pEC(X,Y)$  respectively.

\begin{Def}
Let $\{x_n\}_{n \in D}$ be a net in a pseudo-dislocated uniform space $(X, \mathcal{U}_A)$ . Then $\{x_n\}_{n \in D}$ is said to be \textit{convergent} to $x\in X$, if for each entourage $U \in \mathcal{U}_A$, there exists an $m \in D$, such that $(x, x_n)\in U$ for all $n \geq m$.
\end{Def}

We can show that the following net convergence criteria result holds good for pseudo-dislocated equi-continuous mappings
\begin{thm}
Let $(X, \tau)$ and $(Y, \mathcal{U}_A)$ be a topological space and a pseudo-dislocated uniform space respectively.  Then a function $f\,:(X,\tau) \to (Y, \mathcal{U}_A)$ is pseudo-dislocated equi-continuous at $x \in X$ if and only if whenever a net $\{x_n\}_{n \in D}$ converges to $x$ in $X$, its  image net $\{f(x_n)\}_{n \in D}$ converges to $f(x)$ in $Y$.
\end{thm}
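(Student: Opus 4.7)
My plan is to mirror the proof of the preceding net convergence theorem almost verbatim, since the definitions of pseudo-dislocated equi-continuity, net convergence in $(Y,\mathcal{U}_A)$, and the sets $U[f(x)]$ are formally identical to their counterparts in a genuine uniform space. The only axiom that changes between Definition \ref{Def:1} and the pseudo-dislocated version is $(\mathcal{U}_1)$ versus $(\mathcal{U}_1')$, and a quick inspection of the previous proof shows that the reflexivity of entourages on the whole space is never invoked. Hence the same argument should go through with $\mathcal{U}$ replaced by $\mathcal{U}_A$ throughout.

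For the forward direction, I would assume $f$ is pseudo-dislocated equi-continuous at $x$ and let $\{x_n\}_{n \in D}$ be a net in $X$ with $x_n \to x$. Given an entourage $U \in \mathcal{U}_A$, pseudo-dislocated equi-continuity at $x$ yields an open neighbourhood $V$ of $x$ with $f(V) \subseteq U[f(x)]$. Since $x_n \to x$, we have $x_n \in V$ eventually, hence $f(x_n) \in U[f(x)]$ eventually, i.e.\ $(f(x),f(x_n)) \in U$ eventually. This gives convergence of $\{f(x_n)\}$ to $f(x)$ in $(Y,\mathcal{U}_A)$.

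For the converse, I would argue by contrapositive: if $f$ fails to be pseudo-dislocated equi-continuous at $x$, pick an entourage $U \in \mathcal{U}_A$ such that for every open neighbourhood $V$ of $x$ there exists $x_V \in V$ with $f(x_V) \notin U[f(x)]$. Directing the collection $D$ of open neighbourhoods of $x$ by reverse inclusion, the net $\{x_V\}_{V \in D}$ converges to $x$, while by construction $f(x_V)$ fails to lie in $U[f(x)]$ cofinally, so $\{f(x_V)\}$ does not converge to $f(x)$. This contradicts the hypothesis.

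I do not anticipate any real obstacle here; the main point I want to verify as I write is simply that none of the steps in the previous proof secretly required $\Delta X \subseteq U$ (rather than merely $\Delta_A \subseteq U$), so that the argument transfers cleanly. Since convergence and the condition $f(V)\subseteq U[f(x)]$ are both purely relational statements about entourages, this check is immediate.
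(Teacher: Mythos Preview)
Your proposal is correct and matches the paper's approach exactly: the paper gives no separate proof for this theorem, simply remarking that the net convergence criterion ``holds good for pseudo-dislocated equi-continuous mappings,'' which is precisely your observation that the proof of the preceding theorem carries over verbatim since axiom $(\mathcal{U}_1)$ is never used. Your check that none of the steps secretly require $\Delta X \subseteq U$ is the only thing there is to verify, and you have verified it.
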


\section{Topologies on $EC(Y,Z)$ }
In this section, we introduce few topologies on  $EC(Y,Z)$ and $_pEC(Y,Z)$.\\
Let $(Y,\tau)$ and $(Z, \mathcal{U})$ be a topological space and a uniform space respectively. Then for $z\in Z$, $V \in \tau$ and $U \in \mathcal{U}$, we define:
\begin{center}
$(V,U)_z=\{f \in EC(Y,Z)\,|\, f(V) \subseteq U[z]\,\,\}$
\end{center}
Let $\mathcal{S}_{\tau, \mathcal{U}}=\{(V,U)_z\,|\,z\in Z,\,\,V \in \tau\text{ and } U \in \mathcal{U}\}$.
\begin{lem} \label{lem:3.1}
$\mathcal{S}_{\tau, \mathcal{U}}$ is a subbasis for a topology on $EC(Y,Z)$.
\end{lem}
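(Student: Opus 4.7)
The plan is to verify the single defining condition for a subbasis on $EC(Y,Z)$, namely that $\mathcal{S}_{\tau,\mathcal{U}}$ covers the underlying set; once this is established, the generated topology is obtained in the standard way by declaring as a base the family of all finite intersections of members of $\mathcal{S}_{\tau,\mathcal{U}}$ and then taking arbitrary unions.

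The key observation I would exploit is that $Z \times Z$ itself is an entourage. Indeed, since $\mathcal{U}$ is non-empty, one can fix any $U_0 \in \mathcal{U}$; as $U_0 \subseteq Z \times Z$, axiom $(\mathcal{U}_5)$ of Definition \ref{Def:1} forces $Z \times Z \in \mathcal{U}$. For this maximal entourage, $(Z \times Z)[z] = Z$ for every $z \in Z$.

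Given an arbitrary $f \in EC(Y,Z)$, I would then take $V = Y \in \tau$, $U = Z \times Z \in \mathcal{U}$, and any point $z \in Z$. Trivially $f(V) = f(Y) \subseteq Z = U[z]$, so $f \in (Y, Z\times Z)_z \in \mathcal{S}_{\tau,\mathcal{U}}$. Since the reverse inclusion $\bigcup \mathcal{S}_{\tau,\mathcal{U}} \subseteq EC(Y,Z)$ is immediate from the very definition of the subbasic sets, we conclude that $EC(Y,Z) = \bigcup \mathcal{S}_{\tau,\mathcal{U}}$, which is exactly the condition required for $\mathcal{S}_{\tau,\mathcal{U}}$ to serve as a subbasis.

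There is essentially no real obstacle here: the statement is a routine verification, and all the content lies in identifying $Z \times Z$ as a ``universal'' entourage that lets every equi-continuous map sit inside some subbasic open set. The only mild point is the implicit non-emptiness of $Z$ (and of $\mathcal{U}$), a benign assumption built into the ambient setup.
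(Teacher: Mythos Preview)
Your proof is correct, but the route differs from the paper's. The paper fixes an arbitrary $f\in EC(Y,Z)$, chooses some $y\in Y$ and some $U\in\mathcal{U}$, and then invokes equi-continuity of $f$ at $y$ to produce an open neighbourhood $V_0$ of $y$ with $f(V_0)\subseteq U[f(y)]$; setting $z=f(y)$ then gives $f\in (V_0,U)_z$. In contrast, you bypass equi-continuity altogether by exploiting the trivial entourage $Z\times Z$ (obtained via axiom $(\mathcal{U}_5)$) together with $V=Y$, so that $(Y,Z\times Z)_z=EC(Y,Z)$ for any $z\in Z$. Your argument is the more elementary one: it shows in one stroke that the whole space is already a single subbasic set, and it would apply verbatim to \emph{any} family of maps from $Y$ to $Z$, not just to $EC(Y,Z)$. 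The paper's argument, while slightly less economical, has the minor advantage of using the defining property of the objects at hand and would survive in variants of the construction where the full product $Z\times Z$ is not guaranteed to belong to the structure. Both arguments carry implicit non-emptiness assumptions (you need $Z\neq\emptyset$; the paper needs $Y\neq\emptyset$), and both are entirely adequate here.
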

\begin{proof}
Let $f \in EC(Y,Z)$. Then for $y \in Y$  and $U \in \mathcal{U}$, there exists some open neighbourhood $V_0$ of $y\in Y$ such that $f(V_0) \subseteq U[f(y)]$. Consider $f(y)=z\in Z$. Then we have  $f \in (V_0,U)_z$. Therefore  $EC(Y,Z)\subseteq \bigcup \mathcal{S}_{\tau, \mathcal{U}}$.
\end{proof}
The topology generated by this subbasis will be called the \textit{open-entourage topology} for $EC(Y,Z).$\\

Similarly, for $y \in Y$, $V\in \tau$ and $U \in \mathcal{U}$, let us  consider  $(V,U)_y =\{f \in EC(Y,Z)\,|\, f(V) \subseteq U[f(y)]\,\,  \}$. Then it can be shown that the  collection $\{(V,U)_y\,|\,y\in Y, V \in \tau\text{ and } U \in \mathcal{U}\}$ also forms a subbasis for a topology on $EC(Y,Z)$.\\
The topology generated by this subbasis is called the \textit{open-entourage topology of Type-I} for $EC(Y,Z)$.\\

Clearly, the open-entourage topology is finer than the open-entourage  topology of Type-I.\\
Similarly, let $(Y,\tau)$ and $(Z, \mathcal{U})$ be a topological space and a uniform space respectively and $y\in Y$, $z \in Z$ and for  $U \in \mathcal{U}$. 
\\ We define:
\begin{center}
$(y,U)_z=\{f \in EC(Y,Z)\,|\, f(y) \in U[z]\,\,\}$.
\end{center}
Let $\mathcal{S}_{\tau, \mathcal{U}}^{p}=\{(y,U)_z\,|\,z\in Z,\,\,y \in Y \text{ and } U \in \mathcal{U}\}$.
\begin{lem} 
$\mathcal{S}_{\tau, \mathcal{U}}^p$ is a subbasis for a topology on $EC(Y,Z)$.
\end{lem}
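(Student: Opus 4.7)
The plan is to follow exactly the template used in the proof of Lemma~\ref{lem:3.1}. Since any family of subsets of a set forms a subbasis for a topology as soon as the union of the family is the whole set, the only thing to verify is that $EC(Y,Z) \subseteq \bigcup \mathcal{S}_{\tau,\mathcal{U}}^{p}$.

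To carry this out, I would pick an arbitrary $f \in EC(Y,Z)$ and exhibit a single subbasic element of $\mathcal{S}_{\tau,\mathcal{U}}^{p}$ that contains $f$. Fix any point $y \in Y$, any entourage $U \in \mathcal{U}$, and set $z := f(y) \in Z$. By the diagonal axiom $(\mathcal{U}_1)$ applied to $U$, the pair $(z,z) = (f(y), f(y))$ lies in $U$, hence $f(y) \in U[z]$ by definition of $U[z]$. Therefore $f \in (y,U)_z \in \mathcal{S}_{\tau,\mathcal{U}}^{p}$, which yields the required cover $EC(Y,Z) \subseteq \bigcup \mathcal{S}_{\tau,\mathcal{U}}^{p}$.

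I do not expect any real obstacle here. In fact, the argument is strictly easier than the proof of Lemma~\ref{lem:3.1}: one does not need to invoke equi-continuity at all, since the condition defining $(y,U)_z$ only constrains the value of $f$ at the single point $y$, and the diagonal $\Delta Z \subseteq U$ handles this for free. The only minor point worth noting is that we use $(\mathcal{U}_1)$ from Definition~\ref{Def:1}, which guarantees every entourage contains the diagonal, so $f(y) \in U[f(y)]$ holds unconditionally.
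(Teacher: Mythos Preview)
Your proof is correct and essentially identical to the paper's own argument: pick $f$, choose any $y\in Y$ and $U\in\mathcal{U}$, set $z=f(y)$, and use $(\mathcal{U}_1)$ to get $(f(y),f(y))\in U$, whence $f\in(y,U)_z$. Your additional remark that equi-continuity is not actually needed here is accurate and worth noting.
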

\begin{proof}
Let $f \in EC(Y,Z)$, then for $y \in Y$  and for $U \in \mathcal{U}$ we have $\left(f(y),f(y)\right)\in U$. Consider $f(y)=z\in Z$, then we have  $f(y) \in U[z]$. Hence $f \in (y,U)_z$ and therefore  $EC(Y,Z)\subseteq \bigcup \mathcal{S}_{\tau, \mathcal{U}}^p$.
\end{proof}
The topology generated by this subbasis will be called the \textit{point-entourage topology} for $EC(Y,Z).$
\\
Now, for $y\in Y$, $z \in Z$. Let  $U_t \in \mathcal{U}$ be a transitive entourage of $\mathcal{U}$, that is  $U_t \circ U_t \subset U_t$.\\ We define:\begin{center}
$(y,U_t)_z=\{f \in EC(Y,Z)\,|\, f(y) \in U_t[z]\,\,\}$.
\end{center}
For each uniform space $(Z,\mathcal{U})$, we have $Z \times Z \in \mathcal{U}$. Then $U_t = Z \times Z$ satisfies the property $U_t \circ U_t \subseteq U_t$. Therefore there always exists entourages of the type  $U_t\in \mathcal{U}$.\\
Let $\mathcal{S}_{\tau, \mathcal{U}}^{t,p}=\{(y,U_t)_z\,|\,z\in Z,\,\,y \in Y,  U_t \in \mathcal{U} \text{ such that } U_t \circ U_t\subseteq U_t\}$.\\
It may be verified that $\mathcal{S}_{\tau, \mathcal{U}}^{t,p}$ is a subbasis for a topology on $EC(Y,Z)$.
\\
The topology generated by this subbasis will be called the \textit{point-transitive-entourage topology} for $EC(Y,Z).$
\\
Now, we introduce a  topological structure on  the class of pseudo-dislocated equi-continuous function $_pEC(Y,Z)$.

Let $(Y,\tau)$ and $(Z, \mathcal{U}_A)$ be a topological space and a pseudo-dislocated uniform space respectively. Then for $z\in Z$, $V \in \tau$ and $U \in \mathcal{U}_A$, we define
\begin{center}
$(V,U)_z = \{f\in _pEC(Y,Z)\,|\,f(V)\subseteq U[z]\}$
\end{center}
Let $\mathcal{S}(\tau, \mathcal{U}_A)=\{(V,U)_z\,|\,z \in Z, V \in \tau \text{ and }U \in \mathcal{U}_A\}$.
\begin{lem}
$\mathcal{S}(\tau, \mathcal{U}_A)$ is a subbasis for a topology on $_pEC(Y,Z)$.
\end{lem}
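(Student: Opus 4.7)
The plan is to mirror the proof of Lemma \ref{lem:3.1} essentially verbatim, the only substantive task being the verification that the proposed subbasis covers $_pEC(Y,Z)$. Since any collection of subsets of a set whose union equals the set generates a topology (by taking finite intersections for a base and then arbitrary unions), it is enough to show $_pEC(Y,Z) \subseteq \bigcup \mathcal{S}(\tau, \mathcal{U}_A)$.

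To do this, I would take an arbitrary $f \in {}_pEC(Y,Z)$ and fix any point $y \in Y$ together with any entourage $U \in \mathcal{U}_A$. Pseudo-dislocated equi-continuity of $f$ at $y$ then directly supplies an open neighbourhood $V$ of $y$ with $f(V) \subseteq U[f(y)]$. Writing $z := f(y) \in Z$, this says precisely that $f \in (V,U)_z$, a member of $\mathcal{S}(\tau, \mathcal{U}_A)$. Since $f$ was arbitrary, the covering property holds and the collection is a subbasis for the desired topology on $_pEC(Y,Z)$.

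I do not anticipate any real obstacle here. The one place where one might worry is that $\mathcal{U}_A$ satisfies only the weaker axiom $(\mathcal{U}_1')$ instead of $(\mathcal{U}_1)$, so one cannot assert a priori that $(f(y),f(y))$ lies in every entourage. However, this is not needed: the existence of the neighbourhood $V$ is guaranteed outright by the definition of pseudo-dislocated equi-continuity, and membership of $f$ in $(V,U)_z$ follows from $f(V) \subseteq U[f(y)] = U[z]$, without any recourse to the diagonal. Hence the argument carries over with no modification.
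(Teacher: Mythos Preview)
Your proposal is correct and matches the paper's own approach exactly: the paper simply writes ``Similar to Lemma~\ref{lem:3.1}'', and your argument is precisely the verbatim adaptation of that lemma's proof. Your observation that the weakened axiom $(\mathcal{U}_1')$ plays no role---because the neighbourhood $V$ comes directly from the definition of pseudo-dislocated equi-continuity rather than from any diagonal condition---is correct and worth noting.
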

\begin{proof}
Similar to Lemma \ref{lem:3.1}.
\end{proof}
The topology generated by this subbasis will be called the \textit{open-dislocated-entourage} for $_pEC(Y,Z)$.\\

In the following section, we provide investigations of the function spaces on $EC(Y,Z)$. The development for $_pEC(Y,Z)$,  being similar, is not shown in the paper to avoid repetition.
\section{Admissibility and Splittingness on $EC(Y,Z)$}
In this section, we introduce few topologies on  $EC(Y,Z)$ and investigate some of their properties. Admissibility and splittingness for such spaces are defined and their characterizations are also provided in this section.\\
\begin{Def}
Let $(Y, \tau)$ and $(Z, \mathcal{U})$ be a topological space and a uniform space respectively. Let $(X, \mu)$ be another topological space. Then for a map $g\,: X \times Y \to Z$, we define a map $g^*\,:X \to EC(Y,Z)$ by $g^*(x)(y)=g(x,y)$.
\end{Def} 
These mappings $g$ and $g^*$ are called \textit{associated maps}.
\begin{Def}
Let $(Y, \tau)$ and $(Z, \mathcal{U})$ be a topological space and a uniform space respectively. A topology $\mathfrak{T}$ on $EC(Y,Z)$ is called
\begin{enumerate}[$(i)$]
\item \textit{admissible} if the evaluation map $e\,:EC(Y,Z) \times Y \to Z$ defined by $e(f,y)=f(y)$ is equi-continuous.
\item \textit{splitting} if for each topological space $(X, \mu)$, equi-continuity of the map $g\,: X \times Y \to Z$ implies  continuity of the map $g^*\,:X \to EC(Y,Z)$, where $g^*$ is the associated map of $g$.
\end{enumerate}
\end{Def}
The following results show that equi-continuity at times behaves like continuity only.
\begin{prop}
Let $(X, \tau)$ and $(Y, \mu)$ be two topological spaces and $(Z, \mathcal{U})$ be a uniform space. Let $f\,:X \to Y$ and $g\,:Y \to Z$ be continuous and equi-continuous  functions  at $x \in X$ and $f(x) \in Y$ respectively. Then the composition map $g \circ  f\,: X \to Z$ is equi-continuous at $x \in X$.
\end{prop}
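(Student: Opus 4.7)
The plan is to verify the statement directly from the definition of equi-continuity, exploiting the fact that equi-continuity is defined in exactly the same ``$\varepsilon$--$\delta$'' shape as ordinary continuity, just with entourages playing the role of neighbourhoods at the target end. The structure will be the classical two-step chase: use equi-continuity of $g$ at $f(x)$ to pull an entourage $U$ back to an open neighbourhood $W$ of $f(x)$ in $Y$, and then use continuity of $f$ at $x$ to pull $W$ back to an open neighbourhood $V$ of $x$ in $X$.

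In more detail, I would fix an arbitrary entourage $U\in\mathcal{U}$. Since $g$ is equi-continuous at $f(x)$, there is an open neighbourhood $W$ of $f(x)$ in $Y$ with $g(W)\subseteq U[g(f(x))]$. Since $f$ is continuous at $x$ in the ordinary topological sense and $W$ is an open neighbourhood of $f(x)$, there exists an open neighbourhood $V$ of $x$ in $X$ with $f(V)\subseteq W$. Composing, $(g\circ f)(V)=g(f(V))\subseteq g(W)\subseteq U[g(f(x))]=U[(g\circ f)(x)]$, which is precisely the condition that $g\circ f$ is equi-continuous at $x$.

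An equally short alternative, which I might mention in passing, is to invoke the net-theoretic characterisation proved earlier in Section~2. Given any net $\{x_n\}_{n\in D}$ in $X$ with $x_n\to x$, continuity of $f$ at $x$ gives $f(x_n)\to f(x)$ in $Y$, and then equi-continuity of $g$ at $f(x)$ gives $g(f(x_n))\to g(f(x))$ in $Z$. So $(g\circ f)(x_n)\to (g\circ f)(x)$ for every such net, and the net criterion forces $g\circ f$ to be equi-continuous at $x$.

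There is no real obstacle here: the mild asymmetry that $f$ is only assumed continuous while $g$ is equi-continuous is absorbed automatically because the intermediate space $Y$ is purely topological, so only open-neighbourhood information about $f$ is needed. The only thing worth being a little careful about is writing $U[(g\circ f)(x)]$ correctly, i.e.\ noting that $g(f(x))=(g\circ f)(x)$, so that the final inclusion matches the exact form of the definition used in the paper.
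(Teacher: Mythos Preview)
Your proof is correct and follows exactly the same two-step neighbourhood chase as the paper's proof, differing only in the choice of letters for the intermediate neighbourhoods ($V$ and $W$ are interchanged). The net-theoretic alternative you mention is a valid extra remark, but the paper's argument is precisely your direct $\varepsilon$--$\delta$-style verification.
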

\begin{proof}
Let $U \in \mathcal{U}$ be any entourage in $\mathcal{U}$. Since the map $g$ is equi-continuous at $f(x)$, therefore there exists an open neighbourhood $V$ of $f(x)$ in $Y$, such that $g(V)\subseteq U[g(f(x))]$. We have $f(x)\in V$ and $f$ is continuous at $x$, thus there exists an open neighbourhood $W$ of $x$ in $X$ with $f(W)\subseteq V$. Hence, we have $g(f(W))\subseteq g(V) \subseteq U[g(f(x))]$, that is, $(g\circ f)(W) \subseteq U[(g\circ f)(x)]$. Therefore the composition map $g\circ f$ is equi-continuous at $x$. 
\end{proof}
In the light of the above result, now we provide a characterization of admissibility.
\begin{thm}\label{thm: a}
Let $(Y, \tau)$ and $(Z, \mathcal{U})$ be a topological space  and a uniform space respectively. Let $(X, \mu)$ be any topological space. Then a topology $\mathfrak{T}$ on $EC(Y,Z)$ is admissible if and only if continuity of the map $g^*\,:X \to EC(Y,Z)$ implies equi-continuity of the map $g\,:X \times Y \to Z$, where $g^*$ and $g$ are the associated maps.
\end{thm}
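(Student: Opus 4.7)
The plan is to prove both directions using the natural identification $g(x,y) = g^{*}(x)(y) = e(g^{*}(x), y)$, which can be written as $g = e \circ (g^{*} \times \mathrm{id}_Y)$. This relation reduces the statement to a manipulation involving the evaluation map and the preceding proposition on composition of continuous and equi-continuous maps.

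For the forward direction, I assume $\mathfrak{T}$ is admissible, so $e : EC(Y,Z) \times Y \to Z$ is equi-continuous. Let $(X,\mu)$ be any topological space and suppose $g^{*} : X \to EC(Y,Z)$ is continuous. Then $g^{*} \times \mathrm{id}_Y : X \times Y \to EC(Y,Z) \times Y$ is continuous (continuity of a product of continuous maps is routine). Since $g = e \circ (g^{*} \times \mathrm{id}_Y)$, the previous proposition (composition of a continuous map with an equi-continuous map is equi-continuous) applies directly, giving equi-continuity of $g$ at every point of $X \times Y$.

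For the converse, I would specialize the hypothesis to $(X,\mu) = (EC(Y,Z), \mathfrak{T})$ and take $g^{*} = \mathrm{id}_{EC(Y,Z)}$. This $g^{*}$ is trivially continuous, and its associated map $g : EC(Y,Z) \times Y \to Z$ satisfies $g(f,y) = g^{*}(f)(y) = f(y) = e(f,y)$, so $g = e$. By hypothesis, $g$ is equi-continuous, hence $e$ is equi-continuous, i.e.\ $\mathfrak{T}$ is admissible.

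Neither direction presents a deep obstacle; the only subtlety to verify carefully is the continuity of $g^{*} \times \mathrm{id}_Y$ on the product topology and, when invoking the composition proposition, checking that the hypotheses of that proposition line up with equi-continuity of $e$ at the relevant points (here at each $(g^{*}(x), y)$). Once this bookkeeping is made explicit, the theorem follows directly.
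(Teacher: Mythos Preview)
Your proposal is correct and matches the paper's proof essentially line for line: the forward direction factors $g = e \circ (g^{*}\times \mathrm{id}_Y)$ and applies the composition proposition, while the converse specializes to $X = (EC(Y,Z),\mathfrak{T})$ with $g^{*}$ the identity so that the associated $g$ coincides with the evaluation map. The only difference is notational (the paper names your map $g^{*}\times \mathrm{id}_Y$ as $h$).
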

\begin{proof}
Let the topology $\mathfrak{T}$ on $EC(Y,Z)$ be admissible, that is, the evaluation map $e\,:EC(Y,Z)\times
Y \to Z$ be equi-continuous. Let $g^*\,:X \to EC(Y,Z)$ be any continuous map. We have to show that its associated map $g$ is equi-continuous. Since the map $g^*$ is continuous, therefore the map $h\,:X \times Y \to EC(Y,Z) \times Y$, defined by $h(x,y) = (g^*(x),y)$ is also continuous. Hence, by the last proposition, the composition map $e \circ h$ is equi-continuous. Now, for $(x, y) \in X \times Y$,  consider $e\circ h(x,y) = e(h(x,y))= e(g^*(x),y)=g^*(x)(y)=g(x,y)$. Hence $e \circ h \equiv g$. Therefore the map $g$ is equi-continuous.\\
Conversely, let the condition hold. Consider $X = EC(Y,Z)$ with the topology $\mathfrak{T}$. We define $g^*\,: EC(Y,Z)\to EC(Y,Z)$ as the identity map. Hence $g^*$ is continuous. Thus by the given hypothesis, its associated map $g\, : EC(Y,Z) \times Y \to Z$ is also equi-continuous. For any $(f,y) \in EC(Y,Z) \times Y$, consider $g(f,y)=g^*(f)(y)=f(y)=e(f,y)$, where $e$ is the evaluation map. Therefore $g \equiv e$ and hence equi-continuous. Thus the topology $\mathfrak{T}$ on $EC(Y,Z)$ is admissible.
\end{proof}
In the next set of theorems, we provide characterizations of admissibility and splittingness of the topologies on $EC(Y,Z)$ using net theory. We extend the concept of continuous convergence of continuous mappings \cite{An} for this purpose. But before that we quote a result about directed sets, which we shall use  in our proof.\\
Let $\Delta$ be a directed set. We add a point $\infty$ to $\Delta$ satisfying $\infty\geq n$ for all $n \in \Delta$ and write $\Delta_0=\Delta\cup \{\infty\}$. A topology $\mathcal{T}_0$ may be generated on $\Delta_0$ by declaring every singleton of $\Delta$ as open and neighbourhoods of $\infty$ being of the form $U_{n_0}=\{n\,\,:\,\,n \ge n_0\}$,  $n_0 \in \Delta$.
\begin{lem}\cite{s}
Let $(Y, \tau)$ be a topological space and $\{y_n\}_{n \in D}$ be a net in $Y$. Then the net $\{y_n\}_{n \in \Delta}$ converges to $y$ in $Y$ if and only if the function $f\,:\Delta_0 \to Y$ defined by $f(n)=y_n$ for $n \in \Delta$ and $f(\infty)=y$ is continuous at $\infty$.
\end{lem}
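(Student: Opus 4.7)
The plan is to unpack both directions directly from the definitions, exploiting the fact that the topology $\mathcal{T}_0$ on $\Delta_0$ has been designed precisely so that neighborhoods of $\infty$ are exactly the tails of the directed set. The two notions — ``eventually in $W$'' for the net, and ``preimage of $W$ contains a neighborhood of $\infty$'' for the function — are tautologically the same once one identifies the basic neighborhoods $U_{n_0} = \{n \in \Delta_0 : n \geq n_0\}$ with the tail indices.

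For the forward implication, I would assume that $\{y_n\}_{n \in \Delta}$ converges to $y$ in $Y$, and let $W$ be any open neighborhood of $f(\infty) = y$ in $Y$. By convergence, there exists $n_0 \in \Delta$ with $y_n \in W$ for every $n \geq n_0$. I would then verify that $f(U_{n_0}) \subseteq W$: for any $n \in U_{n_0} \cap \Delta$ we have $f(n) = y_n \in W$, and since $\infty \geq n_0$ we also have $\infty \in U_{n_0}$ with $f(\infty) = y \in W$. Since $U_{n_0}$ is a basic open neighborhood of $\infty$ in $\mathcal{T}_0$, continuity of $f$ at $\infty$ follows.

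For the converse, I would assume $f$ is continuous at $\infty$ and let $W$ be any open neighborhood of $y$ in $Y$. By continuity, $f^{-1}(W)$ contains an open neighborhood of $\infty$, which by the description of $\mathcal{T}_0$ contains some basic neighborhood $U_{n_0}$. Then for all $n \in \Delta$ with $n \geq n_0$ we have $y_n = f(n) \in W$, which is exactly the statement that the net is eventually in $W$. As $W$ was arbitrary, $\{y_n\}_{n \in \Delta} \to y$ in $Y$.

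The main obstacle is essentially cosmetic rather than mathematical: one needs to be careful that the declared basic neighborhoods $U_{n_0}$ actually contain the point $\infty$ (they do, since $\infty \geq n_0$ by convention), so that they qualify as neighborhoods of $\infty$ in $\mathcal{T}_0$. Once this bookkeeping is in place, the two conditions are literally the same statement written in two languages, and no further technical ingredient is required.
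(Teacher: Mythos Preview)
Your argument is correct and is exactly the standard verification: the basic neighborhoods $U_{n_0}$ of $\infty$ in $\Delta_0$ are precisely the tails of the directed set, so ``$f$ maps some $U_{n_0}$ into $W$'' and ``$y_n \in W$ eventually'' are the same assertion. Note that the paper does not actually supply its own proof of this lemma; it is quoted from \cite{s} and stated without proof, so there is nothing to compare against beyond observing that your write-up is the expected one.
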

From this lemma we have the following remark:
\begin{rmk}
Let $(Y, \tau)$ be a topological space and $\{y_n\}_{n \in D}$ be net in $Y$. Then the net $\{y_n\}_{n \in \Delta}$ converges to $y$ in $Y$ if and only if the function $f\,:\Delta_0 \to Y$ defined by $f(n)=y_n$ for $n \in \Delta$ and $f(\infty)=y$ is continuous.
\end{rmk}
Now we come to our main results of this section.
\begin{Def}
Let $\{f_n\}_{n \in \Delta}$ be a net in $EC(Y,Z)$. Then $\{f_n\}_{n\in \Delta}$ is said to \textit{equi-continuously converge} to $f \in EC(Y,Z)$ if for each net $\{y_m\}_{m \in \sigma}$ in $Y$ converging to $y$, $\{f_n(y_m)\}_{(n,m)\in \Delta \times \sigma}$ converges to $f(y)$ in $Z$.
\end{Def}
\begin{thm}
Let $(Y, \tau)$ and $(Z, \mathcal{U})$ be a topological space  and a uniform space respectively. Let $(X, \mu)$ be any topological space. Then a topology $\mathfrak{T}$ on $EC(Y,Z)$ is splitting if and only if for each net $\{f_n\}_{n \in \Delta}$ in $EC(Y,Z)$, equi-continuous convergence of $\{f_n\}_{n\in \Delta}$ to $f$ implies that $\{f_n\}_{n \in \Delta}$ converges to $f$ under $\mathfrak{T}$.
\end{thm}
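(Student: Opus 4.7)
The plan is to prove both implications via the associated-map correspondence, using the topology $\mathcal{T}_0$ on $\Delta_0 = \Delta \cup \{\infty\}$ recalled just before the statement to translate between $\mathfrak{T}$-convergence of a net $\{f_n\}$ and continuity at $\infty$ of the test map $n \mapsto f_n$; this is exactly the content of the remark following the quoted lemma. For the forward direction I would assume $\mathfrak{T}$ is splitting and that $\{f_n\}_{n \in \Delta}$ equi-continuously converges to $f$, and introduce $g : \Delta_0 \times Y \to Z$ by $g(n,y) = f_n(y)$ for $n \in \Delta$ and $g(\infty, y) = f(y)$. The heart of the argument is to verify that $g$ is equi-continuous. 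At a point $(n, y_0)$ with $n \in \Delta$ the singleton $\{n\}$ is open in $\Delta_0$ and equi-continuity of $f_n$ at $y_0$ supplies the required open box immediately. At $(\infty, y_0)$ I would invoke the net-convergence criterion for equi-continuity proved in Section~2: take any net $\{(n_\alpha, y_\alpha)\}_{\alpha \in A}$ in $\Delta_0 \times Y$ converging to $(\infty, y_0)$; then equi-continuous convergence of $\{f_n\}$ applied to $\{y_\alpha\} \to y_0$ gives that the product-indexed net $\{f_n(y_\alpha)\}_{(n,\alpha) \in \Delta \times A}$ converges to $f(y_0)$. Since $n_\alpha \to \infty$ in $\Delta_0$ means $n_\alpha$ is eventually above any prescribed $n_1 \in \Delta$, a short cofinality argument on $\Delta \times A$ yields $f_{n_\alpha}(y_\alpha) \to f(y_0)$, i.e.\ $g(n_\alpha, y_\alpha) \to g(\infty, y_0)$. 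Thus $g$ is equi-continuous; splittingness then gives continuity of $g^* : \Delta_0 \to EC(Y,Z)$, and the remark delivers the required $\mathfrak{T}$-convergence $f_n \to f$.

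For the converse I would assume the net-convergence condition and let $g : X \times Y \to Z$ be equi-continuous for an arbitrary space $(X, \mu)$. A one-variable slice argument shows that each $g^*(x)$ lies in $EC(Y,Z)$. To see $g^*$ is continuous, take any net $x_n \to x$ in $X$; it is enough to prove $g^*(x_n) \to g^*(x)$ under $\mathfrak{T}$. By the standing hypothesis this reduces to checking that $\{g^*(x_n)\}$ equi-continuously converges to $g^*(x)$: for any net $y_m \to y$ in $Y$, the product net $\{(x_n, y_m)\}_{(n,m) \in \Delta \times \sigma}$ converges to $(x, y)$ in $X \times Y$, and equi-continuity of $g$ with the Section~2 net criterion yields $g(x_n, y_m) \to g(x, y)$, i.e.\ $g^*(x_n)(y_m) \to g^*(x)(y)$. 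That is precisely the equi-continuous convergence required.

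I expect the main obstacle to be the diagonal-to-product step in the forward direction: extracting convergence of the oblique net $\{f_{n_\alpha}(y_\alpha)\}_{\alpha \in A}$ from convergence of the full product-indexed net $\{f_n(y_\alpha)\}_{(n,\alpha) \in \Delta \times A}$. What rescues it is precisely that the neighborhood base of $\infty$ in $\mathcal{T}_0$ consists of the tails $U_{n_0}$, so $n_\alpha \to \infty$ supplies exactly the cofinality needed to pair with the product-directed convergence. The remaining steps are routine bookkeeping about nets in product spaces and the identity $g^*(x)(y) = g(x,y)$.
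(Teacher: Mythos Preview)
Your proposal is correct and follows essentially the same route as the paper: construct $g:\Delta_0\times Y\to Z$ from the net, verify its equi-continuity (trivially at points of $\Delta\times Y$, via nets at $(\infty,y_0)$), invoke splittingness to get continuity of $g^*$, and read off $\mathfrak{T}$-convergence; the converse is identical to the paper's product-net argument. Your handling of equi-continuity at $(\infty,y_0)$ is in fact more careful than the paper's---the paper asserts that every convergent net in $\Delta_0\times Y$ is already a product net $\{n\}\times\{y_m\}$, which is not literally true, whereas your diagonal-extraction via cofinality of $\{n_\alpha\}$ in $\Delta$ is exactly what closes that gap; just remember to dispose of indices with $n_\alpha=\infty$ separately using equi-continuity of $f$ itself.
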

\begin{proof}
Let $\mathfrak{T}$ be splitting and $\{f_n\}_{n \in\Delta}$ equi-continuously converge to $f$. Let $\Delta_0=\Delta\cup \{\infty\}$ be equipped with the  topology as described after Theorem $\ref{thm: a}$. Define $g$ $:$ $\Delta_0 \times Y \to Z$ by $g(n,y) = f_n (y)$ for all $n \in \Delta$ and $g(\infty, y) =f(y)$. We show that the map $g$ is equi-continuous.  Now, the only non-constant convergent net in $\Delta_0$ is $\{n\}_{n \in \Delta}$ which converges to $\infty$. Hence if $\mathcal{S}$ is a convergent net in $\Delta_0 \times Y$, then $\mathcal{S} = S_1 \times S_2$, where $S_1 = \{n\}$ and $S_2 = \{y_m\}_{m \in \sigma}$, where $\{y_m\}_{m \in \sigma}$ is any convergent net in $Y$, which converges to some $y$ in $Y$. Then $\mathcal{S}$ converges to $\{\infty\} \times \{y\}$ for some $y \in Y$ and $g(\mathcal{S}) = \{f_n(y_m)\}_{(n,m)\in \Delta\times \sigma}$. By 	equi-continuous convergence of $\{f_n\}_{n \in \Delta}$, $g(\mathcal{S})$ converges to $f(y) = g(\infty, y)$. Hence, by the net theoretic characterization of equi-continuity, $g$ is equi-continuous at $(\infty,y)$. Now, consider any $(n ,y)\in \Delta \times Y$, and let $U$ be any entourage in $\mathcal{U}$. We have, $g(n,y)=f_n(y)$. Since $U\in \mathcal{U}$ and $f_n$ is equi-continuous,  there exists an open neighbourhood $V$ of $y$ such that $f_n(V) \subseteq U[f_n(y)]$. Thus, we get an open neighbourhood $\{n\} \times V$ of $(n,y)$ such that $g(\{n\}\times V)=f_n(V)\subseteq U[f_n(y)]$. That is, $g(\{n\} \times V) \subseteq U[g(n,y)]$. Therefore the map $g$ is equi-continuous at $(n,y)$, for all $(n,y) \in \Delta\times Y$.   As $\mathfrak{T}$ is splitting, this implies that the associated map $g^*\,:\Delta_0 \to EC(Y,Z)$ is continuous. Since $\{n\}_{n \in \Delta}$ converges to $\infty$ in $\Delta_0$, we have, $\{g^*(n)\}_{n \in \Delta}$ converges to $g^*(\infty)$. Now $g^*(n)(y) = g(n,y) = f_n(y)$ and $g^*(\infty)(y) = g(\infty, y) = f(y)$. That is, $g^*(n) = f_n$, $g^*(\infty) = f$. Hence $\{f_n\}_{n \in \Delta}$ converges to $f$ in $EC(Y,Z)$.

Conversely, suppose  equi-continuous convergence implies convergence. Let $g$ $:$ $X \times Y \to Z$ be equi-continuous. We need to show that its associated map $g^*$ is continuous. Let $\{x_n\}_{n \in \Delta}$ be any convergent net in $X$ which converges to $x \in X$. We have to show that the image net $\{g^*(x_n)\}_{n \in \Delta}$ converges to $g^*(x)$ in $EC(Y,Z)$. We define,  $g^*(x_n) =f_n$ and $g^*(x) = f$. Now, we show that $\{f_n\}_{n \in \Delta}$ converges to $f$ in $EC(Y,Z)$. This follows if the net $\{f_n\}_{n \in \Delta}$ equi-continuously converges to $f$. Let us consider, a net $\{y_m\}_{m \in \sigma}$ in $Y$ which converges to some $y$ in $Y$. Then $\{(x_n,y_m)\}_{(n,m)\in \Delta\times \sigma}$ converges to $(x,y)$ in $X \times Y$. As $g$ is equi-continuous, the image net $\{g(x_n,y_m)\}_{(n,m)\in \Delta\times \sigma}$ converges to $g(x,y)$ in $Z$. But $g(x_n,y_m) =g^*(x_n)(y_m)=f_{n}(y_m)$ and $g(x,y) =g^*(x)(y)=f(y)$. That is, $\{f_n(y_m)\}_{(n,m)\in \Delta\times \sigma}$ converges to $f(y)$ in $Z$. Hence $\{f_n\}_{n \in \Delta}$ equi-continuously converges to $f$ in $EC(Y,Z)$. Thus by the hypothesis, we have $\{f_n\}_{n \in \Delta}$ converges to $f$ in $EC(Y,Z)$. That is, $\{g^*(x_n)\}_{n \in \Delta}$ converges to $g^*(x)$ in $EC(Y,Z)$. Hence $g^*$ is continuous. Therefore, $\mathfrak{T}$ is splitting.
\end{proof}
On a similar line,  characterization of admissibility is also provided below.
\begin{thm} \label{thm:3.10}
Let $(Y, \tau)$ and $(Z, \mathcal{U})$ be a topological space and a uniform space respectively. Let $(X, \mu)$ be any topological space. Then a topology $\mathfrak{T}$ on $EC(Y,Z)$ is admissible if and only if for each net $\{f_n\}_{n \in \Delta}$ in $EC(Y,Z)$, convergence of $\{f_n\}_{n\in \Delta}$ to $f$ in $EC(Y,Z)$ implies equi-continuous convergence of  $\{f_n\}_{n \in \Delta}$ to $f$.
\end{thm}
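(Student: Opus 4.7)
The plan is to mirror the structure of the splittingness characterization but exploit the symmetric roles of the two nets in the definition of equi-continuous convergence, and to use the net-theoretic characterization of equi-continuity (Theorem 2.5 above) applied to the evaluation map $e \colon EC(Y,Z) \times Y \to Z$.

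For the forward direction, assume $\mathfrak{T}$ is admissible, so $e$ is equi-continuous. Suppose $\{f_n\}_{n \in \Delta}$ converges to $f$ under $\mathfrak{T}$, and let $\{y_m\}_{m \in \sigma}$ be any net in $Y$ converging to $y$. Then, in the product topology on $EC(Y,Z) \times Y$, the net $\{(f_n, y_m)\}_{(n,m) \in \Delta \times \sigma}$ converges to $(f, y)$. Applying the net criterion for equi-continuity of $e$, the image net $\{e(f_n, y_m)\} = \{f_n(y_m)\}_{(n,m) \in \Delta \times \sigma}$ converges to $e(f, y) = f(y)$ in $Z$. This is precisely equi-continuous convergence of $\{f_n\}$ to $f$.

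For the converse, assume that every $\mathfrak{T}$-convergent net in $EC(Y,Z)$ is automatically equi-continuously convergent. I will show $e$ is equi-continuous at every $(f, y) \in EC(Y,Z) \times Y$, again via nets. Let $\{(h_\alpha, y_\alpha)\}_{\alpha \in A}$ be any net in $EC(Y,Z) \times Y$ converging to $(f, y)$. By projection, $\{h_\alpha\}_{\alpha \in A}$ converges to $f$ in $\mathfrak{T}$ and $\{y_\alpha\}_{\alpha \in A}$ converges to $y$ in $Y$. By hypothesis, $\{h_\alpha\}$ equi-continuously converges to $f$; applying this to the specific net $\{y_\beta\}_{\beta \in A}$ converging to $y$ yields convergence of $\{h_\alpha(y_\beta)\}_{(\alpha, \beta) \in A \times A}$ to $f(y)$ in $Z$.

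The one step requiring some care -- and the main (minor) obstacle -- is passing from this doubly-indexed convergence to convergence of the diagonal $\{h_\alpha(y_\alpha)\}_{\alpha \in A}$. This is handled by noting that the map $\alpha \mapsto (\alpha, \alpha)$ from $A$ to $A \times A$ is cofinal: given $(\alpha_0, \beta_0) \in A \times A$, any $\gamma \in A$ with $\gamma \geq \alpha_0$ and $\gamma \geq \beta_0$ (which exists by directedness of $A$) satisfies $(\gamma, \gamma) \geq (\alpha_0, \beta_0)$. Hence $\{h_\alpha(y_\alpha)\}_{\alpha \in A}$ is a subnet of $\{h_\alpha(y_\beta)\}_{(\alpha, \beta) \in A \times A}$ and therefore also converges to $f(y)$. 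Thus $\{e(h_\alpha, y_\alpha)\} \to e(f, y)$, and by Theorem 2.5, $e$ is equi-continuous, proving admissibility of $\mathfrak{T}$.
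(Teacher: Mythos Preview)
Your proof is correct, but it takes a genuinely different route from the paper's argument. The paper does not work with the evaluation map $e$ directly; instead it goes through the alternative characterization of admissibility in Theorem~\ref{thm: a} (continuity of $g^*$ implies equi-continuity of $g$). For the forward direction, the paper encodes a $\mathfrak{T}$-convergent net $\{f_n\}$ as a continuous map $g^*:\Delta_0\to EC(Y,Z)$ (using the auxiliary topology on $\Delta_0=\Delta\cup\{\infty\}$), invokes Theorem~\ref{thm: a} to deduce that the associated $g:\Delta_0\times Y\to Z$ is equi-continuous, and then reads off $\{f_n(y_m)\}\to f(y)$. For the converse, it takes an arbitrary continuous $g^*:X\to EC(Y,Z)$, picks convergent nets $\{x_n\}$, $\{y_m\}$ in $X$ and $Y$, and argues that $\{g(x_n,y_m)\}\to g(x,y)$ via the hypothesis.

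Your approach is more elementary: you use only the original definition of admissibility (equi-continuity of $e$) together with the net criterion of Theorem~2.4, and you avoid the $\Delta_0$ machinery entirely. Your forward direction is essentially one line. In the converse you also make explicit a point the paper leaves implicit: an arbitrary convergent net in the product space need not be a product net, and you bridge this by the cofinal diagonal embedding $A\hookrightarrow A\times A$ to pass from $\{h_\alpha(y_\beta)\}_{(\alpha,\beta)}\to f(y)$ to $\{h_\alpha(y_\alpha)\}_\alpha\to f(y)$. The paper's version, by contrast, fits more seamlessly with the parallel proof for splittingness and with the associated-map formalism used throughout the section.
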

\begin{proof}
Let $\mathfrak{T}$ be admissible and $\{f_n\}_{n \in \Delta}$ be any net in $EC(Y,Z)$ such that $\{f_n\}_{n \in\Delta}$ converges to $f$. Let us define $ g^* : \Delta_0 \to EC(Y,Z)$ as $g^*(n) = f_n$ and $g^*(\infty) = f$, where $\Delta_0$ is generated by $\Delta$. Now the only non constant convergent net in $\Delta_0$ is $\{n\}$  which converges to $\infty$ and $\{g^*(n)\}_{n \in \Delta} = f_n$ converges to $f = g^*(\infty)$, by the given hypothesis. Hence $g^*$ is continuous. Therefore the associated map $g : \Delta_0 \times Y \to Z$ is equi-continuous. Let $\{y_m\}_{m \in \sigma}$ be any net  in $Y$ such that $\{y_m\}_{m \in \sigma}$ converges to $y$ in $Y$. Then $\{(n, y_m)\}_{(n,m)\in \Delta\times \sigma}$ is a convergent net in $\Delta_0 \times Y$ which converges to $(\infty, y)$. Therefore $\{g(n, y_m)\}_{(n,m)\in \Delta\times \sigma}$ converges to $g(\infty, y)$. That is, $\{g^*(n)(y_m)\}_{(n,m)\in \Delta\times \sigma}$ converges to $g^*(\infty)(y)$, which implies $\{f_n(y_m)\}_{(n,m)\in \Delta\times \sigma}$ converges to $f(y)$. Hence $\{f_n\}_{n \in \Delta}$ equi-continuously converges to $f$.    

Conversely, let $g^*$ be continuous. We have to show that its associated map $g$ is equi-continuous. Let $\{x_n\}_{n \in \Delta}$ and $\{y_m\}_{m \in \sigma}$ be two convergent nets in $X$ and $Y$ respectively  such that $\{(x_n, y_m)\}_{(n,m)\in \Delta\times \sigma}$ converges to $(x,y)$. Since $\{x_n\}_{n \in \Delta}$ converges to $x$ and $g^*$ is continuous, therefore the image net $\{g^*(x_n)\}_{n \in \Delta}$ converges to $g^*(x)$. Let us define $g^*(x_n) = f_{x_n}$ and $g^*(x) = f_x$. Then, we have $\{f_{x_n}\}_{n \in \Delta}$ converges to $f_x$ in $EC(Y,Z)$. Thus by the given hypothesis, $\{f_{x_n}\}_{n \in \Delta}$ equi-continuously converges to $f_x$. Then for the convergent net $\{y_m\}_{m \in \sigma}$ which converges to $y$, we have $\{f_{x_n}(y_m)\}_{(n,m)\in \Delta\times \sigma}$ converges to $f_x(y)$, that is $\{g(x_n , y_m)\}_{(n,m)\in \Delta\times \sigma}$ converges to $g(x,y)$. Hence $g$ is  equi-continuous. Therefore $\mathfrak{T}$ is admissible.
\end{proof}
Below, we mention a lemma without proof which is valid for  function spaces of continuous functions as well as of continuous multifunctions\cite{s}. Here $\mu \geq \tau$, means $\tau \subseteq \mu$.
\begin{lem}
Let $\tau$ and $\mu$ be two topologies on $EC(Y,Z)$ and $\mu \geq \tau$. Then admissibility of $\tau$ implies admissibility of $\mu$. On the other hand, if $\mu$ is splitting, then $\tau$ is also splitting.
\end{lem}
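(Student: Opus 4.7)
The plan is to reduce both claims to the net-theoretic characterizations of admissibility and splitting already established (Theorem \ref{thm:3.10} and its splitting counterpart). The single point that drives everything is the elementary fact that if $\mu \geq \tau$ (i.e.\ $\tau \subseteq \mu$), then every $\mu$-convergent net is automatically $\tau$-convergent, since any $\tau$-open neighbourhood of the limit is also a $\mu$-open neighbourhood.

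\medskip

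\textbf{Admissibility of $\mu$.} I would take an arbitrary net $\{f_n\}_{n\in\Delta}$ in $EC(Y,Z)$ converging to $f$ under $\mu$. By the above observation, $\{f_n\}_{n\in\Delta}$ also converges to $f$ under $\tau$. Since $\tau$ is admissible, Theorem \ref{thm:3.10} yields that $\{f_n\}_{n\in\Delta}$ equi-continuously converges to $f$. Applying the same theorem in reverse to $\mu$, admissibility of $\mu$ follows.

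\medskip

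\textbf{Splitting of $\tau$.} Here I would start with a net $\{f_n\}_{n\in\Delta}$ in $EC(Y,Z)$ that equi-continuously converges to $f$. By the splitting characterization applied to the splitting topology $\mu$, the net converges to $f$ in $\mu$. The coarser-convergence observation then gives convergence in $\tau$. Invoking the splitting characterization for $\tau$, one concludes that $\tau$ is splitting.

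\medskip

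\textbf{Main obstacle.} There is essentially no technical obstacle once the net characterizations are in hand; the argument is a short bookkeeping exercise. The only point that requires care is keeping the ordering convention straight: the hypothesis $\mu \geq \tau$ means $\tau \subseteq \mu$, so $\mu$ is the \emph{finer} topology, and one must use the implication \emph{``$\mu$-convergence $\Rightarrow$ $\tau$-convergence''} in \emph{both} halves of the proof (not the reverse direction, which would be false in general). An alternative more direct route, avoiding nets, would be to note that the identity $\mathrm{id}\colon (EC(Y,Z),\mu)\to (EC(Y,Z),\tau)$ is continuous and to combine this with the earlier proposition on composition of a continuous map with an equi-continuous map for admissibility, and with the universal property of splitting for the splitting half; but the net-based argument is shorter given the machinery already developed.
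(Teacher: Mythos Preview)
Your proposal is correct. The paper's own proof reads simply ``Easy and left for the readers,'' so there is no specific argument to compare against; your net-theoretic route via the characterizations of splitting and admissibility, together with the observation that $\mu$-convergence implies $\tau$-convergence when $\tau\subseteq\mu$, is a clean and complete way to fill in the omitted details.
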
 
\begin{proof}
Easy and left for the readers.
\end{proof}
Now we provide examples of admissible and splitting topologies using the results obtained  so far.\\
In our next pair of theorems, we show that  open-entourage topology is admissible whereas point-transitive-entourage topology is splitting.
\begin{thm}
Let $(Y, \tau)$ and $(Z, \mathcal{U})$ be a topological space and a uniform space respectively. Then the open-entourage topology on $EC(Y,Z)$ is admissible.
\end{thm}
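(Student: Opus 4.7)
The plan is to unwind the definition of admissibility and verify equi-continuity of the evaluation map $e:EC(Y,Z)\times Y\to Z$ pointwise, exploiting the fact that the subbasic sets $(V,U)_z$ are tailor-made for controlling the image of $e$.

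Fix an arbitrary point $(f,y)\in EC(Y,Z)\times Y$ and an arbitrary entourage $U\in\mathcal{U}$. To establish equi-continuity of $e$ at $(f,y)$, I need to exhibit an open neighbourhood of $(f,y)$ in the product topology whose image under $e$ lies in $U[e(f,y)]=U[f(y)]$. First, I would apply the equi-continuity of $f$ itself at the point $y$ to the given entourage $U$: this yields an open neighbourhood $V$ of $y$ in $Y$ such that $f(V)\subseteq U[f(y)]$. By the very definition of the subbasis $\mathcal{S}_{\tau,\mathcal{U}}$, this says precisely that $f\in (V,U)_{f(y)}$, and $(V,U)_{f(y)}$ is a subbasic (hence open) set in the open-entourage topology $\mathfrak{T}$ on $EC(Y,Z)$.

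The proposed neighbourhood of $(f,y)$ is then $(V,U)_{f(y)}\times V$, which is open in $\mathfrak{T}\times\tau$. For any $(g,y')$ in this neighbourhood, $g\in(V,U)_{f(y)}$ gives $g(V)\subseteq U[f(y)]$, and $y'\in V$ forces $g(y')\in g(V)\subseteq U[f(y)]$. In other words, $e(g,y')=g(y')\in U[f(y)]=U[e(f,y)]$, which is exactly what equi-continuity of $e$ at $(f,y)$ demands. Since $(f,y)$ was arbitrary, $e$ is equi-continuous on all of $EC(Y,Z)\times Y$, and $\mathfrak{T}$ is admissible.

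There is no genuine obstacle here: the subbasis $\mathcal{S}_{\tau,\mathcal{U}}$ was designed precisely so that each generator $(V,U)_z$ encapsulates the kind of neighbourhood-of-$f$ information one produces from equi-continuity of $f$ at a single point, and this matches the needed neighbourhood on the $EC(Y,Z)$-factor exactly. The only subtlety to be watched is to keep the entourage $U$ the same on both sides (one does not need to pass through an entourage $V'$ with $V'\circ V'\subseteq U$), and to recognise that $(V,U)_{f(y)}$, while depending on the point $f(y)\in Z$, is indeed a member of $\mathcal{S}_{\tau,\mathcal{U}}$ and therefore open in $\mathfrak{T}$.
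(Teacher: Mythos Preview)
Your proof is correct, but it follows a genuinely different route from the paper. The paper does not verify equi-continuity of the evaluation map directly; instead it invokes the net-theoretic characterization of admissibility (Theorem~\ref{thm:3.10}): it assumes a net $\{f_n\}$ converges to $f$ in the open-entourage topology and a net $\{y_m\}$ converges to $y$, and then shows $\{f_n(y_m)\}$ converges to $f(y)$. To do this the paper splits via an entourage $U_0$ with $U_0\circ U_0\subset U$, obtaining $(f_n(y_m),f(y_m))\in U_0$ from convergence of $\{f_n\}$ through subbasic sets of the form $(V_0,U_0)_{f(y_m)}$, and $(f(y_m),f(y))\in U_0$ from equi-continuity of $f$, and then composes. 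Your argument sidesteps both the net machinery and the entourage-splitting step: by working straight from the definition of the evaluation map you observe that a single subbasic set $(V,U)_{f(y)}$ already pins down $g(y')$ to within $U[f(y)]$ for every $(g,y')\in (V,U)_{f(y)}\times V$, so no composition is needed. This is cleaner and shows that the open-entourage subbasis is in some sense the minimal one making $e$ equi-continuous; the paper's approach, by contrast, illustrates how the net criterion developed earlier in the section is meant to be applied, at the cost of an extra $U_0\circ U_0\subset U$ manoeuvre.
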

\begin{proof}
Let $(Y, \tau)$ and $(Z, \mathcal{U})$ be a topological space and a uniform space respectively. We have to show that the open-entourage topology on $EC(Y,Z)$ is admissible, that is, for each net $\{f_n\}_{n \in \Delta}$ in $EC(Y,Z)$, convergence of $\{f_n\}_{n \in \Delta}$ to $f$ in $EC(Y,Z)$ implies equi-continuous convergence of $\{f_n\}_{n \in \Delta}$ to $f$.\\
Let $\{y_m\}_{m \in \sigma}$ be any convergent net in $Y$ which converges to $y$. We have to show that the net $\{f_n(y_m)\}_{(n,m) \in \Delta\times \sigma}$ converges to $f(y)$ in $(Z, \mathcal{U})$.\\
Let $U$ be any entourage in $\mathcal{U}$. Then there exists some $U_0 \in \mathcal{U}$ such that $U_0 \circ U_0 \subset U$. As $f$ is equi-continuous at $y_m$ and  $U_0 \in \mathcal{U}$, therefore there exists an open neighbourhood $V_0 \in \tau$ of $y_m$ such that $f(V_0)\subseteq U_0[f(y_m)]$, which implies $f \in (V_0,U_0)_{f(y_m)}$. Since the net $\{f_n\}_{n \in \Delta}$ converges to $f$ in $EC(Y,Z)$ and $(V_0,U_0)_{f(y_m)}$ is a subbasic open neighbourhood of $f$, therefore $f_n \in (V_0,U_0)_{f(y_m)}$ eventually. We have $f_n(V_0)\subseteq U_0[f(y_m)]$, whence  $f_n(y_m)\in U_0[f(y_m)]$ eventually. Hence we have $(f_n(y_m),f(y_m))\in U_0$ eventually.\\
Now, consider the net $\{y_m\}_{m \in \sigma}$ converging to $y$ in $Y$. As  $f \in EC(Y,Z)$,  the image net $\{f(y_m)\}_{m \in \sigma}$ converges to $f(y)$, that is, for $U_0^{-1} \in \mathcal{U}$, we have $(f(y),f(y_m))\in U_0^{-1}$, which implies $(f(y_m),f(y))\in U_0$ eventually.  Hence $(f_n(y_m),f(y_m))\circ (f(y_m),f(y)) \in U_0 \circ U_0 \subset U$ eventually. Thus we have $(f_n(y_m),f(y))\in U$ eventually and therefore the net $\{f_n(y_m)\}_{(n,m) \in \Delta\times\sigma}$ converges to $f(y)$ in $Z$. Therefore by Theorem \ref{thm:3.10}, the  open-entourage topology on $EC(Y,Z)$ is admissible.
\end{proof}
In the next theorem, we show that the point-transitive-entourage topology on $EC(Y,Z)$ is splitting.
\begin{thm}
Let $(Y, \tau)$ and $(Z, \mathcal{U})$ be a topological space and a uniform space respectively. Then the point-transitive-entourage topology on $EC(Y,Z)$ is splitting.
\end{thm}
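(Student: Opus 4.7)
The plan is to apply the net-theoretic characterization of splittingness proved earlier: it suffices to show that whenever a net $\{f_n\}_{n\in \Delta}$ in $EC(Y,Z)$ equi-continuously converges to some $f \in EC(Y,Z)$, the net $\{f_n\}_{n \in \Delta}$ converges to $f$ in the point-transitive-entourage topology. Since that topology is generated by the subbasis $\mathcal{S}_{\tau,\mathcal{U}}^{t,p}$, verifying convergence reduces to checking, for each subbasic open neighbourhood $(y,U_t)_z$ of $f$, that $f_n \in (y,U_t)_z$ eventually.

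So fix $(y,U_t)_z$ containing $f$, meaning $y \in Y$, $z \in Z$, $U_t \in \mathcal{U}$ with $U_t \circ U_t \subseteq U_t$, and $(z,f(y)) \in U_t$. To exploit equi-continuous convergence, I would use the constant net $\{y_m\}_{m \in \sigma}$ defined by $y_m = y$ for all $m$; this trivially converges to $y$ in $Y$. By the assumed equi-continuous convergence of $\{f_n\}$ to $f$, the net $\{f_n(y_m)\}_{(n,m)\in \Delta\times\sigma} = \{f_n(y)\}_{(n,m)\in \Delta\times\sigma}$ converges to $f(y)$ in $(Z,\mathcal{U})$. In particular, applied to the entourage $U_t$ itself, we obtain $(f(y),f_n(y)) \in U_t$ eventually in $n$.

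Now the transitivity hypothesis $U_t \circ U_t \subseteq U_t$ does the rest. Combining $(z,f(y)) \in U_t$ with $(f(y),f_n(y)) \in U_t$ (eventually) yields $(z,f_n(y)) \in U_t \circ U_t \subseteq U_t$ eventually, i.e.\ $f_n(y) \in U_t[z]$, i.e.\ $f_n \in (y,U_t)_z$ eventually. Hence $\{f_n\}$ converges to $f$ in the point-transitive-entourage topology, and by the splittingness characterization the topology is splitting.

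The essential point—what makes this topology splitting while the broader point-entourage topology is not singled out this way—is precisely the transitivity condition $U_t \circ U_t \subseteq U_t$, which lets us bypass the usual need to choose a smaller entourage $U_0$ with $U_0 \circ U_0 \subseteq U$ (as in the admissibility proof for the open-entourage topology). I expect no genuine obstacle beyond recognising that the constant net is the right test net to extract pointwise convergence at $y$ from equi-continuous convergence, and that transitivity replaces the usual $U_0$-splitting step.
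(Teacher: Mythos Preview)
Your proposal is correct and follows essentially the same route as the paper: use the net-theoretic characterization of splittingness, test against a subbasic neighbourhood $(y,U_t)_z$, feed the constant net at $y$ into equi-continuous convergence to get $(f(y),f_n(y))\in U_t$ eventually, and then compose with $(z,f(y))\in U_t$ using $U_t\circ U_t\subseteq U_t$. The only cosmetic difference is that the paper passes through $U_t^{-1}$ before composing, whereas you compose directly; your version is in fact slightly more streamlined.
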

\begin{proof}
Let $(Y, \tau)$ and $(Z, \mathcal{U})$ be a topological space and a uniform space respectively. We have to show that the point-transitive-entourage topology on $EC(Y,Z)$ is splitting, that is for each net $\{f_n\}_{n \in \Delta}$ in $EC(Y,Z)$, equi-continuous convergence to $\{f_n\}_{n \in \Delta}$ to $f$ implies  convergence of $\{f_n\}_{n \in \Delta}$ to $f$  in $EC(Y,Z)$.\\
Let $(y, U_t)_z$ be any subbasic open neighbourhood of $f$ in $EC(Y,Z)$. Then, $f(y)\in U_t[z]$, that is, $(f(y),z)\in U_t$.
Let $\{y_m\} = y$ for each $m \in \sigma$, be a constant net. Then $\{y_m\}_{m \in \sigma}$ converges to $y$ in $Y$. Since the net $\{f_n\}_{n \in \Delta}$ equi-continuously converges to $f$,  the net $\{f_n(y_m)\}_{(n,m) \in \Delta\times\sigma}$ converges to $f(y)$ in $Z$, that is, net $\{f_n(y)\}_{n \in \Delta}$ converges to $f(y)$. Then for $U_t \in \mathcal{U}$,  we have $U_t^{-1}\in \mathcal{U}$, which implies $(f(y),f_n(y)) \in U_t^{-1}$ eventually. Thus we have $(f_n(y),f(y))\in U_t$ eventually. Accordingly, we have  $(f_n(y),f(y))\circ (f(y),z) \in U_t \circ U_t \subset U_t$. Therefore $(f_n(y),z)\in U_t$ eventually which implies $f_n\in (y,U_t)_z$ eventually. Hence net $\{f_n\}_{n \in \Delta}$ converges to $f$ in $EC(Y,Z)$. Thus point-transitive-entourage topology on $EC(Y,Z)$ is splitting.
\end{proof}

\section{Dual Topology For Equi-Continuous Functions}
In this section, we introduce the notion of dual topology for the topologies on $EC(Y,Z)$. We provide here  interesting relationships regarding the splittingness and admissibility of a topology on equi-continuous functions and its dual.\\
For a topological space $(Y, \tau)$ and a uniform space $(Z, \mathcal{U})$, let $f \in EC(Y,Z)$, $U \in \mathcal{U}$ and $y \in Y$. Then by the definition of equi-continuity, there exists $V \in \tau$ of $y$ such that  $f(V) \subseteq U[f(y)]$. We denote the open set $V$ obtained this way by $U(f,y)$. Now we define :
\begin{center}
$\mathcal{O}_Z(Y)= \{U(f,y)\,:\, U \in \mathcal{U}\,\,, f\in EC(Y,Z), y \in Y\}$.
\end{center}
\begin{Def}
Let  $(Y, \tau)$ and $(Z, \mathcal{U})$ be a topological space and a uniform space respectively. Let $EC(Y,Z)$ be the set of all equi-continuous functions from $Y$ to $Z$. Then for subsets $\mathbb{H}\subseteq \mathcal{O}_Z(Y)$, $\mathcal{H}\subseteq EC(Y,Z)$ and $U \in \mathcal{U}$, we define:\\
$(\mathbb{H},U)= \{f\in EC(Y,Z)$ $|$ $U(f,y)\in \mathbb{H}$ for each $y \in Y$ $\}$\\
$(\mathcal{H},U)=\{U(f,y)$ $|$ $f\in \mathcal{H}$, \, $y \in Y$ $\}.$
\end{Def}
Let  $(Y, \tau)$ and $(Z, \mathcal{U})$ be a topological space and a uniform space respectively and $f \in EC(Y,Z)$, $U \in\mathcal{U}$. Then for each $y \in Y$, there exists $V \in \tau$ such that $f(V) \subseteq U[f(y)]$. Then $\mathbb{H} = \{U(f,y)$ $|$ $y \in Y\}$ is a subset of $\mathcal{O}_Z(Y)$, such that $f \in (\mathbb{H},U)$.   Therefore one can always define the sets of the form $(\mathbb{H},U)$ and $(\mathcal{H},U)$ which are non empty and well defined.
\begin{Def}
Let $(Y, \tau)$ and $(Z, \mathcal{U})$ be a topological space and a uniform space respectively. Let $\mathbb{T}$ be a topology on $\mathcal{O}_Z(Y)$. Then we define:\begin{center}
$\mathcal{S}(\mathbb{T}) =\{(\mathbb{H},U)$ $|$ $\mathbb{H}\in \mathbb{T}, U \in \mathcal{U}\}$.
\end{center}
\end{Def}
\begin{thm}
$\mathcal{S}(\mathbb{T})$ is a subbasis for a topology on $EC(Y,Z)$.
\end{thm}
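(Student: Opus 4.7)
The plan is to show that $\mathcal{S}(\mathbb{T})$ covers $EC(Y,Z)$, since any family of subsets of a set whose union equals the set serves as a subbasis for a topology (the one generated by finite intersections and arbitrary unions). Thus the only thing to verify is
\[
EC(Y,Z) \subseteq \bigcup \mathcal{S}(\mathbb{T}).
\]

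The key observation is that $\mathbb{T}$, being a topology on $\mathcal{O}_Z(Y)$, contains the whole space $\mathcal{O}_Z(Y)$ as one of its open sets. Fix any $U \in \mathcal{U}$; then $(\mathcal{O}_Z(Y), U) \in \mathcal{S}(\mathbb{T})$. I claim this single member already covers $EC(Y,Z)$: given $f \in EC(Y,Z)$, equi-continuity of $f$ guarantees that for every $y \in Y$ there is an open neighbourhood $V$ of $y$ with $f(V) \subseteq U[f(y)]$, and by the very definition preceding the theorem statement, such a $V$ is precisely what we denote $U(f,y)$, which is a member of $\mathcal{O}_Z(Y)$. Hence $U(f,y) \in \mathcal{O}_Z(Y)$ for every $y \in Y$, which by the definition of $(\mathbb{H},U)$ (with $\mathbb{H} = \mathcal{O}_Z(Y)$) gives $f \in (\mathcal{O}_Z(Y), U)$.

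Combining these steps, every $f \in EC(Y,Z)$ lies in some element of $\mathcal{S}(\mathbb{T})$, so $\mathcal{S}(\mathbb{T})$ is a subbasis on $EC(Y,Z)$. There is no real obstacle here: the only subtlety is recognising that the sets $U(f,y)$ constructed from equi-continuity are automatically elements of $\mathcal{O}_Z(Y)$ (since $\mathcal{O}_Z(Y)$ was defined to collect exactly such sets), and that $\mathcal{O}_Z(Y) \in \mathbb{T}$ because $\mathbb{T}$ is a topology on it. Once these two facts are in hand, the covering property is immediate and no further work on finite intersections is needed.
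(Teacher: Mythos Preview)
Your proof is correct and follows the same strategy as the paper's: both verify that $\bigcup \mathcal{S}(\mathbb{T}) = EC(Y,Z)$, which is all that is needed for a subbasis. The only difference is cosmetic: the paper, for a given $f$ and $U$, picks for each $y$ an open $\mathbb{H}_y \in \mathbb{T}$ containing $U(f,y)$ and sets $\mathbb{H} = \bigcup_{y\in Y}\mathbb{H}_y$, whereas you observe directly that $\mathcal{O}_Z(Y)$ itself lies in $\mathbb{T}$ and take $\mathbb{H} = \mathcal{O}_Z(Y)$, which is a cleaner choice yielding the same conclusion.
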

\begin{proof}
Let $f \in EC(Y,Z)$. Then for  $y\in Y$, $U\in \mathcal{U}$, there exists  $V_y \in \tau$ such that $f(V_y)\subseteq U[f(y)]$. Consider $V_y = U(f,y)$. As $V_y \in \mathcal{O}_Z(Y)$ and $\mathbb{T}$ is a topology on $\mathcal{O}_Z(Y)$, therefore there exists an open set $\mathbb{H}_y$, such that $V_y = U(f,y) \in \mathbb{H}_y$. Let $\mathbb{H}=\displaystyle{\bigcup_{y \in Y}}\mathbb{H}_y$.  Then $f \in (\mathbb{H},U)$.  Hence $EC(Y,Z) = \bigcup \mathcal{S}(\mathbb{T})$. Therefore $\mathcal{S}(\mathbb{T})$ is a subbasis for a topology on $EC(Y,Z)$.
\end{proof}
Now, we provide a topology on $\mathcal{O}_Z(Y)$ using the topology on $EC(Y,Z)$.
\begin{thm}
Let $\mathfrak{T}$ be a topology on $EC(Y,Z)$. Then
\begin{center}
$\mathcal{S}(\mathfrak{T}) = \{(\mathcal{H},U)$ $|$ $\mathcal{H}\in \mathfrak{T}$, $U \in \mathcal{U}\}$
\end{center}
is a subbasis for a topology on $\mathcal{O}_Z(Y)$.
\end{thm}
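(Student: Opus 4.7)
The plan is to verify the sole condition needed for $\mathcal{S}(\mathfrak{T})$ to be a subbasis, namely that its members cover the underlying set: $\mathcal{O}_Z(Y) = \bigcup \mathcal{S}(\mathfrak{T})$. This matches the style used in the earlier subbasis lemmas (e.g.\ Lemma \ref{lem:3.1}), where the only checkable condition was the covering property. The reverse inclusion $\bigcup \mathcal{S}(\mathfrak{T}) \subseteq \mathcal{O}_Z(Y)$ is immediate from the definition of the sets $(\mathcal{H},U)$, each of whose elements has the form $U(f,y)$ and hence lies in $\mathcal{O}_Z(Y)$.

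For the forward inclusion, I would take an arbitrary $V \in \mathcal{O}_Z(Y)$ and unpack the definition to write $V = U(f,y)$ for some $U \in \mathcal{U}$, $f \in EC(Y,Z)$, and $y \in Y$. The key observation is that $EC(Y,Z)$ itself is an element of any topology $\mathfrak{T}$ on $EC(Y,Z)$. So I would choose $\mathcal{H} = EC(Y,Z) \in \mathfrak{T}$ together with the same entourage $U \in \mathcal{U}$ and consider the subbasic set $(\mathcal{H},U) \in \mathcal{S}(\mathfrak{T})$. Since $f \in \mathcal{H}$ and $y \in Y$, the definition of $(\mathcal{H},U)$ gives $U(f,y) \in (\mathcal{H},U)$, so $V \in (\mathcal{H},U) \subseteq \bigcup \mathcal{S}(\mathfrak{T})$.

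There is no real obstacle here: the argument is a direct unpacking of definitions exploiting that $EC(Y,Z) \in \mathfrak{T}$. The only mild subtlety to flag in writing is that the notation $(\mathcal{H},U)$ is defined only via the symbols $U(f,y)$, so one should emphasize that the $f$ witnessing $V = U(f,y)$ can be reused inside the subbasic set, guaranteeing membership rather than merely suggesting it.
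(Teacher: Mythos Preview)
Your proposal is correct and follows essentially the same approach as the paper's proof: both verify the covering condition by taking an arbitrary $U(f,y)\in\mathcal{O}_Z(Y)$ and locating a subbasic set $(\mathcal{H},U)$ containing it. The only cosmetic difference is that you explicitly take $\mathcal{H}=EC(Y,Z)$, whereas the paper simply notes that $f\in\mathcal{H}$ for some $\mathcal{H}\in\mathfrak{T}$.
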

\begin{proof}
Let $U(f,y) \in \mathcal{O}_Z(Y)$. Clearly $f \in EC(Y,Z)$ and hence $f \in \mathcal{H}$ for some $\mathcal{H}\in \mathfrak{T}$. Then  $U(f,y)\in (\mathcal{H},U)$. Therefore $\mathcal{O}_Z(Y) =\bigcup (\mathcal{H},U)$. Hence $\mathcal{S}(\mathfrak{T})$ is a subbasis for a topology on $\mathcal{O}_Z(Y)$.
\end{proof}
The topologies defined above on $EC(Y,Z)$ and $\mathcal{O}_Z(Y)$ are denoted by $\mathfrak{T}(\mathbb{T})$ and $\mathbb{T}(\mathfrak{T})$  respectively. We shall refer these topologies  as \textit{dual} to $\mathbb{T}$ and $\mathfrak{T}$ respectively.\\
Now we define the splittingness and admissibility on $\mathcal{O}_Z(Y)$ and investigate the possible relationships between a topology on $EC(Y,Z)$ and its dual and vice-versa.
\begin{Def}
Let $(X, \tau)$ and $(Y, \mu)$ be two topological spaces. A multifunction $F : X \to Y$ is called
\begin{enumerate}[$(i)$]
\item \textit{upper semi continuous} (or $u.s.c.$, in brief) at $x \in X$ if for each open set $V \subseteq Y$ with $F(x) \subseteq V$, there exists an open set $U$ of  $X$ such that $x \in U$ and $F(U) \subseteq V$;
\item \textit{lower semi continuous} (or $l.s.c$, in brief) at $x \in X$ if for each open set $V \subseteq Y$ with $F(x) \cap V \neq \emptyset$, there exists  an open set $U$ of  $X$ such that $x \in U$ and $F(u) \cap V \neq \emptyset$ for every $u \in U$;
\item \textit{continuous} at $x \in X$, if it is both $u.s.c.$ and $l.s.c.$ at $x$;
\item \textit{continuous} (resp. $u.s.c.$, $l.s.c.$) if it is continuous (resp. $u.s.c.$, $l.s.c.$) at each point of $X$.
\end{enumerate}
\end{Def}
\begin{Def}
Let  $(Y, \tau)$ and $(Z, \mathcal{U})$ be a topological space and a uniform space respectively. Let $(X, \mu)$ be another topological space. Let $g\,:X \times Y \to Z$ and $g^*\,:X \to EC(Y,Z)$ be two associated maps. . Then we define a multifunction $\overline{g}\,:X \times \mathcal{U} \to \mathcal{O}_Z(Y)$  by $\overline{g}(x, U)=\{ U(g^*(x),y)\,|\,y \in Y\}=\{U(g_x, y)\,|\,y \in Y\}$, for every $x \in X$ and $U \in \mathcal{U}$.
\end{Def}
\begin{Def}
Let  $(Y, \tau)$ and $(Z, \mathcal{U})$ be a topological space and a uniform space respectively. Let $(X, \mu)$ be another topological space. A multifunction $M\,:X \times \mathcal{U} \to \mathcal{O}_Z(Y)$ is called \textit{upper semi continuous with respect to the first variable} if the map $M_U\,:X \to \mathcal{O}_Z(Y)$ defined by $M_U(x) = M(x,U)$ is upper semi continuous for every $x \in X$ and for a fixed $U \in \mathcal{U}$.
\end{Def}
Now, we are in position to define the admissibility and splittingness  of the topological space $(\mathcal{O}_Z(Y),\mathbb{T} )$.
\begin{Def}
Let  $(Y, \tau)$ and $(Z, \mathcal{U})$ be a topological space and a uniform space respectively. Let $(X, \mu)$ be another topological space. Then topology $\mathbb{T}$ on $\mathcal{O}_Z(Y)$ is called
\begin{enumerate}[$(i)$]
\item \textit{splitting} if equi-continuity of the map $g\,:X \times Y \to Z$ implies upper semi continuity with respect to the first variable of the map $\overline{g}\,:X \times \mathcal{U} \to \mathcal{O}_Z(Y)$;
\item \textit{admissible} if for every map $g^*\,: X \to EC(Y,Z)$, upper semi continuity with respect to the first variable of the map $\overline{g}\,:X \times \mathcal{U} \to \mathcal{O}_Z(Y)$ implies equi-continuity of the map $g\,:X \times Y \to Z$. 
\end{enumerate}
\end{Def} 
In the remaining part of this section, we investigate  how duality links the admissibility and splittingness of a topology on $EC(Y,Z)$ and that on $\mathcal{O}_Z(Y)$.
\begin{thm}
A topology $\mathbb{T}$ on $\mathcal{O}_Z(Y)$ is splitting if and only if its dual topology $\mathfrak{T}(\mathbb{T})$ on $EC(Y,Z)$ is splitting.
\end{thm}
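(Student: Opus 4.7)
The plan is to exploit the following tautology, which is immediate from the definitions of $(\mathbb{H},U)$ and $\overline{g}$:
\begin{equation*}
g^{*}(x) \in (\mathbb{H},U) \iff \overline{g}(x,U) \subseteq \mathbb{H}.
\end{equation*}
Once this is observed, continuity of $g^{*}$ checked against subbasic opens of $\mathfrak{T}(\mathbb{T})$ becomes literally the same statement as upper semicontinuity of the section $\overline{g}_{U}$ checked against opens of $\mathbb{T}$. The entire proof is organized around moving back and forth across this equivalence.

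For the forward direction, assume $\mathbb{T}$ is splitting and let $g : X \times Y \to Z$ be equi-continuous. To show $g^{*} : X \to (EC(Y,Z), \mathfrak{T}(\mathbb{T}))$ is continuous, it suffices to pull back a subbasic open set $(\mathbb{H},U)$ with $\mathbb{H} \in \mathbb{T}$ and $U \in \mathcal{U}$, and check the preimage is open at each $x$ with $g^{*}(x) \in (\mathbb{H},U)$. By the tautology, $g^{*}(x) \in (\mathbb{H},U)$ says exactly that $\overline{g}_{U}(x) \subseteq \mathbb{H}$. Splittingness of $\mathbb{T}$ gives upper semicontinuity of $\overline{g}_{U}$ at $x$, so there is an open neighborhood $W$ of $x$ in $X$ with $\overline{g}_{U}(W) \subseteq \mathbb{H}$; translating back, $g^{*}(W) \subseteq (\mathbb{H},U)$, as required.

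For the converse, assume $\mathfrak{T}(\mathbb{T})$ is splitting and let $g : X \times Y \to Z$ be equi-continuous. Then $g^{*}$ is continuous into $(EC(Y,Z), \mathfrak{T}(\mathbb{T}))$. To verify that $\overline{g}$ is upper semicontinuous with respect to the first variable, fix $U \in \mathcal{U}$, $x \in X$ and an open $\mathbb{H} \in \mathbb{T}$ with $\overline{g}_{U}(x) \subseteq \mathbb{H}$. By the same tautology, $g^{*}(x)$ lies in the subbasic open set $(\mathbb{H},U)$, so by continuity of $g^{*}$ there is an open $W \ni x$ with $g^{*}(W) \subseteq (\mathbb{H},U)$. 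Reading the containment back through the equivalence gives $\overline{g}_{U}(x') \subseteq \mathbb{H}$ for every $x' \in W$, i.e.\ $\overline{g}_{U}(W) \subseteq \mathbb{H}$, which is the desired upper semicontinuity of $\overline{g}_{U}$ at $x$.

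The only thing that could actually go wrong is conceptual rather than computational: one must be sure that the subbasic opens $(\mathbb{H},U)$ generate $\mathfrak{T}(\mathbb{T})$ (so that checking continuity of $g^{*}$ against them is enough), and that upper semicontinuity of $\overline{g}_{U}$ is tested against \emph{all} opens of $\mathbb{T}$ (which it is). Both are immediate from the definitions given earlier, so no hidden obstacle appears; the proof is essentially a one-line translation under the equivalence, executed twice.
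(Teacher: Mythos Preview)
Your proof is correct and follows essentially the same route as the paper's: both directions reduce to the equivalence $g^{*}(x)\in(\mathbb{H},U)\iff \overline{g}_{U}(x)\subseteq\mathbb{H}$, and then pass between continuity of $g^{*}$ at subbasic opens and upper semicontinuity of $\overline{g}_{U}$ at opens of $\mathbb{T}$. The only difference is presentational---you isolate the equivalence explicitly at the outset, whereas the paper re-derives it inline in each direction.
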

\begin{proof}
Let $(\mathcal{O}_Z(Y), \mathbb{T})$ be splitting, that is, for every topological space $(X, \mu)$, equi-continuity of the map $g\,:X \times Y \to Z$ implies upper semi continuity with respect to the first variable of the map $\overline{g}\,:X \times \mathcal{U}\to \mathcal{O}_Z(Y)$. We have to show that the topology $\mathfrak{T}(\mathbb{T})$ on $EC(Y,Z)$ is splitting, that is for every topological space $X$, equi-continuity of the map $f: X \times Y \to Z$ implies continuity of the associated map $f^*: X \to EC(Y,Z)$. Therefore, it is sufficient to show that upper semi continuity with respect to the first variable of the map $\overline{g} : X \times \mathcal{U} \to \mathcal{O}_Z(Y)$ implies continuity of the associated map $g^*: X \to EC(Y,Z)$.\\
Let $x \in X$ and $(\mathbb{H}, U)\in \mathfrak{T}(\mathbb{T})$ be a subbasic open neighbourhood of $g^*(x)$. Then $g^*(x)\in (\mathbb{H},U)$, which implies $U(g^*(x),y) \in \mathbb{H}$ for each $y \in Y$.  Therefore $U(g_x,y)\in \mathbb{H}$ and hence $\overline{g}(x,U)\subseteq \mathbb{H}$. Now $\overline{g}: X \times \mathcal{U} \to \mathcal{O}_Z(Y)$ is upper semi continuous with respect the first variable and $\mathbb{H}$ is an open neighbourhood of $\overline{g}_U(x)$. Hence there exists an open neighbourhood $V$ of $x$ such that $\overline{g}_U(V) \subseteq  \mathbb{H}$. Now, for an element $x' \in V$, we have $\overline{g}_U(x') \subseteq  \mathbb{H}$. Therefore $\overline{g}(x',U) \subseteq \mathbb{H}$ and hence $U(g_{x'}, y) \in \mathbb{H}$ for each $y \in Y$. That is, $U(g^*(x'), y)\in \mathbb{H}$ for every $x' \in V$, which implies $g^*(x')\in (\mathbb{H}, U)$ for every $x' \in V$. Thus $g^*(V)\subseteq (\mathbb{H},U)$. Therefore the map $g^*$ is continuous.\\
Conversely, let $\mathfrak{T}(\mathbb{T})$ be splitting, we have to show that the topology $\mathbb{T}$ is splitting. For this, it is sufficient to show that $\overline{g}:X \times \mathcal{U} \to \mathcal{O}_Z(Y)$ is upper semi continuous with respect to the first variable provided that the map $g^*\,: X \to EC(Y,Z)$ is continuous. Let, for a fixed $U \in \mathcal{U}$ and $x \in X$,   $\mathbb{H} \in \mathcal{O}_Z(Y)$ be an open neighbourhood of $\overline{g}(x,U)$. That is $\overline{g}(x,U) \subseteq \mathbb{H}$ which implies $U(g_x, y) \in \mathbb{H}$ for each $y \in Y$. Therefore  $U(g^*(x),y) \in \mathbb{H}$ for each $y \in Y$. Thus we have $g^*(x) \in (\mathbb{H}, U)$. Now the map $g^*$ is given to be continuous and $(\mathbb{H},U)$ is an open neighbourhood of $g^*(x)$. Thus there exists an open neighbourhood $V$ of $x$ such that $g^*(V) \subseteq (\mathbb{H},U)$. Now for, any $x' \in V$, we have $g^*(x') \in (\mathbb{H},U)$. Therefore, $U(g^*(x'),y) = U(g_{x'}, y) \in \mathbb{H}$ for every $x' \in V$. Hence, we have $\overline{g}_U(x') \subseteq \mathbb{H}$,  for all $x' \in V$. Hence $\overline{g}_U(V) \subseteq \mathbb{H}$. Hence the map $\overline{g}$ is upper semi continuous with respect to the first variable. Thus, the topology $\mathbb{T}$  is a splitting.
\end{proof}
\begin{thm}
A topology $\mathbb{T}$ on $\mathcal{O}_Z(Y)$ is admissible if and only if its dual topology $\mathfrak{T}(\mathbb{T})$ on $EC(Y,Z)$ is admissible.
\end{thm}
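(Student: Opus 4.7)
The plan is to mirror the structure of the preceding splitting-duality theorem, combined with the admissibility characterization already established as Theorem \ref{thm: a}. The substantive technical content is already present in the previous proof, where both halves establish the following equivalence: for any topological space $(X,\mu)$ and any $g^{*}:X\to EC(Y,Z)$ with associated maps $g$ and $\overline{g}$, the map $g^{*}$ is continuous with respect to the dual topology $\mathfrak{T}(\mathbb{T})$ if and only if $\overline{g}:X\times\mathcal{U}\to\mathcal{O}_{Z}(Y)$ is upper semi continuous with respect to the first variable (with $\mathbb{T}$ on $\mathcal{O}_{Z}(Y)$). The unraveling relies on the identity $U(g^{*}(x),y)=U(g_{x},y)$ together with $\overline{g}(x,U)=\{U(g_{x},y)\mid y\in Y\}$, so that membership $g^{*}(x)\in(\mathbb{H},U)$ is literally the same statement as the containment $\overline{g}(x,U)\subseteq\mathbb{H}$, and the two subbasic neighbourhood unfoldings match perfectly. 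I would quote this equivalence at the start of the proof, noting that it was extracted verbatim from the previous theorem.

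For the forward direction, I would assume that $\mathbb{T}$ is admissible on $\mathcal{O}_{Z}(Y)$ and aim to deduce that $\mathfrak{T}(\mathbb{T})$ is admissible on $EC(Y,Z)$. By Theorem \ref{thm: a}, it is enough to show that for every topological space $(X,\mu)$, continuity of $g^{*}:X\to EC(Y,Z)$ with respect to $\mathfrak{T}(\mathbb{T})$ forces equi-continuity of $g:X\times Y\to Z$. Continuity of $g^{*}$ produces, by the equivalence above, upper semi continuity of $\overline{g}$ with respect to the first variable, and the admissibility hypothesis on $\mathbb{T}$ then produces equi-continuity of $g$, completing this direction.

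For the converse, I would assume that $\mathfrak{T}(\mathbb{T})$ is admissible on $EC(Y,Z)$ and prove that $\mathbb{T}$ is admissible on $\mathcal{O}_{Z}(Y)$. Let $(X,\mu)$ be any topological space, let $g^{*}:X\to EC(Y,Z)$ be arbitrary, and suppose $\overline{g}:X\times\mathcal{U}\to\mathcal{O}_{Z}(Y)$ is upper semi continuous with respect to the first variable. The equivalence yields continuity of $g^{*}$ with respect to $\mathfrak{T}(\mathbb{T})$; applying Theorem \ref{thm: a} to the admissible topology $\mathfrak{T}(\mathbb{T})$ then delivers equi-continuity of $g:X\times Y\to Z$, which is exactly what admissibility of $\mathbb{T}$ demands.

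The main obstacle is essentially bookkeeping rather than new mathematics. The subbasic translation between $(\mathbb{H},U)$-membership in $EC(Y,Z)$ and upper-containment in $\mathcal{O}_{Z}(Y)$ was written as two separate implications in the splitting proof, so I would take care to state it up front as an equivalence and cite it cleanly in both directions, so that the admissibility argument is not obscured by a rerun of the subbasic unfolding. Beyond that, the only delicate point is to invoke Theorem \ref{thm: a} symmetrically: in one direction to translate admissibility of $\mathfrak{T}(\mathbb{T})$ into the implication ``continuous $g^{*}$ $\Rightarrow$ equi-continuous $g$'', and in the other to recover admissibility of $\mathfrak{T}(\mathbb{T})$ from the same implication.
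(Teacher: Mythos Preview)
Your proposal is correct and follows essentially the same approach as the paper: both directions hinge on the equivalence ``$g^{*}$ continuous for $\mathfrak{T}(\mathbb{T})$ $\Leftrightarrow$ $\overline{g}$ is u.s.c.\ in the first variable for $\mathbb{T}$'', which then converts admissibility of one topology into admissibility of the other via Theorem~\ref{thm: a}. The only cosmetic difference is that the paper re-derives the subbasic translation $g^{*}(x)\in(\mathbb{H},U)\Leftrightarrow \overline{g}(x,U)\subseteq\mathbb{H}$ inline in each direction rather than quoting it once from the splitting proof as you propose.
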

\begin{proof}
Let the topology $\mathbb{T}$ on $\mathcal{O}_Z(Y)$ be  admissible, that is, for every  topological space $(X, \mu)$ and for every map $g^*: X \to EC(Y,Z)$, upper semi continuity of the map $\overline{g}: X \times \mathcal{U} \to \mathcal{O}_Z(Y)$ with respect the first variable  implies equi-continuity  of the map $g: X \times Y \to Z$. We have to show that the topology $\mathfrak{T}(\mathbb{T})$ is admissible, that is continuity of $g^*: X \to EC(Y,Z)$ implies equi-continuity of its associated map $g\,:X \times Y \to Z$. Thus it is sufficient to prove that $\overline{g}: X \times \mathcal{U} \to \mathcal{O}_Z(Y)$ is upper semi continuous with respect to the first variable provided the map $g^*:X \to EC(Y,Z)$ is continuous.\\
Let us have, for fixed $U \in \mathcal{U}$ and $x \in X$,  a subbasic open neighbourhood $\mathbb{H}$ of $\overline{g}(x,U)$. Therefore $\overline{g}(x, U) \subseteq \mathbb{H}$. That is, $\overline{g}_U(x) \subseteq \mathbb{H}$ which implies $U(g_x, y)\in \mathbb{H}$ for each $y \in Y$.  
Thus $g^*(x) \in (\mathbb{H},U)$. Since the map $g^*$ is given to be  continuous and $(\mathbb{H},U)$ is a subbsaic open neighbourhood of $g^*(x)$, therefore there exists an open neighbourhood $V$ of $x$ such that $g^*(V) \subseteq (\mathbb{H},U)$. Now, for $x' \in V$, we have $g^*(x') \in (\mathbb{H},U)$, that is $U(g^*(x'),y)=U(g_{x'},y) \in \mathbb{H}$ for each $y \in Y$. Thus $\overline{g}_U(x') \subseteq \mathbb{H}$ for all $x' \in V$. Hence, $\overline{g}_U(V) \subseteq \mathbb{H}$. Therefore the map $\overline{g}$ is upper semi continuous with respect to the first variable. Hence the topology $\mathfrak{T}(\mathbb{T})$ is admissible.\\
Conversely, let $\mathfrak{T}(\mathbb{T})$ be admissible , we have to show that the topology $\mathbb{T}$ on $\mathcal{O}_Z(Y)$ is admissible. For this, it is sufficient to show that upper semi continuity with respect to the first variable of the map $\overline{g}:X \times \mathcal{U} \to \mathcal{O}_Z(Y)$ implies continuity  of the map $g^*:X \to EC(Y,Z)$.\\
Let $x \in X$ and $(\mathbb{H}, U)$ be a subbasic open neighbourhood of $g^*(x)$, that is $g^*(x) \in (\mathbb{H},U)$. Thus $U(g^*(x),y) \in \mathbb{H}$ for every $y \in Y$. Hence $\overline{g}_U(x) \subseteq \mathbb{H}$. Now the map $\overline{g}$ is given to be upper semi continuous with respect to the first variable and $\mathbb{H}$ is a subbaisc open neighbourhood of $ \overline{g}_U(x)$. Thus there exists an open neighbourhood $V$ of $x$ such that $\overline{g}_U(V) \subseteq \mathbb{H}$. Hence for $x' \in V$, we have $\overline{g}_U(x') \subseteq \mathbb{H}$,  which implies $\overline{g}(x',U)\subseteq \mathbb{H}$. Hence $U(g_{x'},y) = U(g^*(x'),y) \in \mathbb{H}$ for each $y \in Y$. Therefore $g^*(x') \in (\mathbb{H},U)$ for all $x' \in V$. Therefore $g^*(V) \subseteq (\mathbb{H},U)$. Thus  the topology $\mathbb{T}$ is admissible.
\end{proof}
In our next set of theorems, we investigate the relationship between a topology on $EC(Y,Z)$ and its dual.
\begin{thm}
A topology $\mathfrak{T}$  on $EC(Y,Z)$ is   splitting  if and only if its dual topology  $\mathbb{T}(\mathfrak{T})$ is  splitting.
\end{thm}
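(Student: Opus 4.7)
The plan is to mirror the proof of the corresponding splittingness/duality theorem proved a few lines earlier for $\mathbb{T}$ versus $\mathfrak{T}(\mathbb{T})$. Everything is driven by the single observation that, for any $\mathcal{H}\subseteq EC(Y,Z)$, any $U\in\mathcal{U}$ and any $x\in X$, one has $g^{*}(x)\in\mathcal{H}$ if and only if $\overline{g}(x,U)\subseteq(\mathcal{H},U)$. This is immediate from $\overline{g}(x,U)=\{U(g^{*}(x),y)\mid y\in Y\}$ and $(\mathcal{H},U)=\{U(f,y)\mid f\in\mathcal{H},\,y\in Y\}$, and it is what translates open neighbourhoods of $g^{*}(x)$ in $\mathfrak{T}$ into subbasic open neighbourhoods of $\overline{g}(x,U)$ of the form $(\mathcal{H},U)\in\mathbb{T}(\mathfrak{T})$, and conversely.

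For necessity, I would suppose $\mathfrak{T}$ is splitting and take an equi-continuous $g\colon X\times Y\to Z$; splittingness of $\mathfrak{T}$ furnishes continuity of $g^{*}\colon X\to(EC(Y,Z),\mathfrak{T})$. To verify that $\overline{g}\colon X\times\mathcal{U}\to(\mathcal{O}_Z(Y),\mathbb{T}(\mathfrak{T}))$ is upper semi continuous in its first variable, fix $U\in\mathcal{U}$, $x\in X$ and a subbasic open neighbourhood $(\mathcal{H},U)$ of $\overline{g}(x,U)$ (so $\mathcal{H}\in\mathfrak{T}$). The equivalence forces $g^{*}(x)\in\mathcal{H}$; continuity of $g^{*}$ at $x$ then yields an open $V\ni x$ with $g^{*}(V)\subseteq\mathcal{H}$, and reading the equivalence at each $x'\in V$ gives $\overline{g}(x',U)\subseteq(\mathcal{H},U)$, so $\overline{g}_{U}(V)\subseteq(\mathcal{H},U)$, as required.

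For sufficiency, suppose $\mathbb{T}(\mathfrak{T})$ is splitting and let $g$ be equi-continuous; splittingness then makes $\overline{g}$ upper semi continuous in its first variable. To show continuity of $g^{*}$, fix $x\in X$ and an open neighbourhood $\mathcal{H}\in\mathfrak{T}$ of $g^{*}(x)$, and choose any $U\in\mathcal{U}$. The equivalence yields $\overline{g}(x,U)\subseteq(\mathcal{H},U)$; upper semi continuity of $\overline{g}_{U}$ at $x$ then produces an open $V\ni x$ with $\overline{g}_{U}(V)\subseteq(\mathcal{H},U)$, and the reverse direction of the equivalence, applied at each $x'\in V$, gives $g^{*}(x')\in\mathcal{H}$, so $g^{*}(V)\subseteq\mathcal{H}$.

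The only delicate point is the bookkeeping behind the equivalence $g^{*}(x)\in\mathcal{H}\Leftrightarrow\overline{g}(x,U)\subseteq(\mathcal{H},U)$. The nontrivial direction needs $U(g_{x'},y)\in(\mathcal{H},U)$ to be read as $g_{x'}\in\mathcal{H}$, i.e.\ the symbol $U(f,y)$ must be treated as carrying the data of the triple $(U,f,y)$ when one decodes membership in $(\mathcal{H},U)$. Once that convention is in force, both implications reduce to a direct transcription of the earlier splittingness/duality argument on $\mathcal{O}_{Z}(Y)$, and no new analysis of the uniformity $\mathcal{U}$ is required.
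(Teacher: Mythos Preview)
Your proposal is correct and follows essentially the same route as the paper: both directions are reduced to the equivalence ``$g^{*}$ continuous $\Leftrightarrow$ $\overline{g}$ upper semi continuous in the first variable'', and this equivalence is established via the translation $g^{*}(x)\in\mathcal{H}\Leftrightarrow\overline{g}(x,U)\subseteq(\mathcal{H},U)$ at the level of subbasic open sets, exactly as in the paper. Your closing remark about the bookkeeping behind the implication $U(g_{x'},y)\in(\mathcal{H},U)\Rightarrow g_{x'}\in\mathcal{H}$ is apt; the paper uses this step without comment, so you are simply making explicit a convention the paper tacitly assumes.
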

\begin{proof}
Let $\mathfrak{T}$ be a splitting  topology on $EC(Y,Z)$. We have to show that its dual topology $\mathbb{T}(\mathfrak{T})$  is also   splitting. For this, it is sufficient to prove that continuity of the map $g^*: X \to EC(Y,Z)$ implies upper semi continuity of the map $\overline{g}:X \times \mathcal{U} \to \mathcal{O}_Z(Y)$ with respect to the first variable.\\
Let $x \in X$ and $\mathcal{H} \in \mathfrak{T}$ be an open neighbourhood of $g^*(x)$. Then for any fixed $U \in \mathcal{U}$, $(\mathcal{H},U)\in \mathbb{T}(\mathfrak{T})$ is an open neighbourhood of $\overline{g}(x,U)$. That is, $\overline{g}(x,U) \subseteq (\mathcal{H},U)$. Now $\overline{g}(x, U) = \{U(g_x, y) \,|\,y\in Y\}\subseteq (\mathcal{H},U)$, hence $U(g_x,y)\in (\mathcal{H},U)$ for each $y \in Y$ by definition. This implies  $g^*(x) \in \mathcal{H}$. Since the map $g^*$ is given to be continuous and $\mathcal{H}$ is an open neighbourhood of $g^*(x)$, therefore there exists an open neighbourhood $V$ of $x$ such that $g^*(V) \subseteq \mathcal{H}$. Now, consider an element $x' \in V$, we have $g^*(x') \in \mathcal{H}$, that is $U(g_{x'},y) \in (\mathcal{H},U)$ for each $y \in Y$. Hence $\overline{g}(x',U)\subseteq (\mathcal{H},U)$, for all $x' \in V$. Therefore $\overline{g}_U( V) \subseteq (\mathcal{H},U)$ and the map $\overline{g}$ is upper semi continuous with respect to the first variable. Hence the result.\\
Conversely, let the topology    $\mathbb{T}(\mathfrak{T})$ be a splitting topology. We have to show that the topology $\mathfrak{T}$ on $EC(Y,Z)$ is splitting. It is equivalent to show that the map $g^*:X \to EC(Y,Z)$ is continuous provided the map $\overline{g}:X \times \mathcal{U} \to \mathcal{O}_Z(Y)$ is upper semi continuous.\\
Let $x \in X$ and $\mathcal{H}$ be an open neighbourhood of $g^*(x)$, that is, $g^*(x) \in \mathcal{H}$. For any fixed $U \in \mathcal{U}$, we have $U(g_x,y) \in (\mathcal{H},U)$ for each $y \in Y$. Therefore $\overline{g}(x,U)\subseteq (\mathcal{H},U)$ for a fixed $U \in \mathcal{U}$. Since the map $\overline{g}$ is given to be continuous with respect to the first variable, there exists an open neighbourgood $V$ of $x$ such that $ \overline{g}_U(V)\subseteq (\mathcal{H},U)$. Now, for $x' \in V$, we have $\overline{g}_U(x') \subseteq (\mathcal{H},U)$ which implies $U(g_{x'},y) \in (\mathcal{H},U)$ for each $y \in Y$. Therefore $g^*(x') \in \mathcal{H}$ for every $x' \in V$. That is, $g^*(V)\subseteq \mathcal{H}$. Hence the map $g^*$ is continuous.
\end{proof}
\begin{thm}
A topology $\mathfrak{T}$  on $EC(Y,Z)$ is   admissible  if and only if its dual topology  $\mathbb{T}(\mathfrak{T})$ is  admissible.
\end{thm}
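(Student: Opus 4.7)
The plan is to parallel the proof of the preceding theorem on the splittingness of duals, exploiting the key correspondence
\[
g^*(x)\in\mathcal{H}\ \Longleftrightarrow\ U(g_x,y)\in(\mathcal{H},U)\text{ for every }y\in Y\ \Longleftrightarrow\ \overline{g}(x,U)\subseteq(\mathcal{H},U),
\]
which matches subbasic open neighbourhoods of $g^*(x)$ in $(EC(Y,Z),\mathfrak{T})$ with subbasic open sets containing $\overline{g}_U(x)$ in $(\mathcal{O}_Z(Y),\mathbb{T}(\mathfrak{T}))$. Since admissibility differs from splittingness only in the direction of the implication between continuity of $g^*$ and equi-continuity of $g$, the earlier translation argument should transfer with minimal change.

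For the forward direction I assume $\mathfrak{T}$ is admissible and show $\mathbb{T}(\mathfrak{T})$ is admissible. Given $g^*\colon X\to EC(Y,Z)$ with $\overline{g}$ upper semi continuous in the first variable, admissibility of $\mathfrak{T}$ reduces the problem to proving $g^*$ continuous. I pick $x\in X$ and a subbasic open $\mathcal{H}\in\mathfrak{T}$ containing $g^*(x)$; for any fixed $U\in\mathcal{U}$, $g^*(x)\in\mathcal{H}$ yields $U(g_x,y)\in(\mathcal{H},U)$ for all $y\in Y$, whence $\overline{g}(x,U)\subseteq(\mathcal{H},U)$. Upper semi continuity of $\overline{g}_U$ then furnishes an open $V\ni x$ with $\overline{g}_U(V)\subseteq(\mathcal{H},U)$. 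For any $x'\in V$, the inclusion $\overline{g}_U(x')\subseteq(\mathcal{H},U)$ gives $U(g_{x'},y)\in(\mathcal{H},U)$ for each $y\in Y$, and the reverse leg of the correspondence delivers $g^*(x')\in\mathcal{H}$, so $g^*(V)\subseteq\mathcal{H}$.

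The converse is symmetric. Assume $\mathbb{T}(\mathfrak{T})$ is admissible and let $g^*\colon X\to EC(Y,Z)$ be continuous; admissibility of $\mathbb{T}(\mathfrak{T})$ reduces the problem to showing $\overline{g}$ is upper semi continuous in the first variable. Fixing $U\in\mathcal{U}$ and $x\in X$, take a subbasic neighbourhood $(\mathcal{H},U)$ of $\overline{g}_U(x)$. The correspondence translates $\overline{g}(x,U)\subseteq(\mathcal{H},U)$ into $g^*(x)\in\mathcal{H}$, and continuity of $g^*$ produces an open $V\ni x$ with $g^*(V)\subseteq\mathcal{H}$; pulling this back gives $U(g_{x'},y)\in(\mathcal{H},U)$ for every $x'\in V$ and $y\in Y$, i.e.\ $\overline{g}_U(V)\subseteq(\mathcal{H},U)$.

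The main obstacle, just as in the splittingness version, is the reduction of upper semi continuity to a check on the subbasic opens $(\mathcal{H},U)$ in the matching $U$-slot, together with a careful bookkeeping of the fact that $\overline{g}(x,U)$ is enumerated by the family $\{U(g_x,y):y\in Y\}$, so that $\overline{g}(x,U)\subseteq(\mathcal{H},U)$ is truly equivalent to the membership $U(g_x,y)\in(\mathcal{H},U)$ for every $y$. Once that bookkeeping is fixed, both halves of the proof amount to translating an open neighbourhood across the dual construction and reading the translated condition back on the other side.
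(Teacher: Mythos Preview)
Your proposal is correct and follows precisely the route the paper intends: the paper's own proof of this theorem is simply ``Left for the reader,'' and the evident expectation is that one mirror the argument of the immediately preceding splitting theorem, which is exactly what you do. Your two directions are the admissibility analogues of the two directions there, using the same translation between $g^*(x)\in\mathcal{H}$ and $\overline{g}(x,U)\subseteq(\mathcal{H},U)$ via the subbasic sets of $\mathbb{T}(\mathfrak{T})$, so there is nothing to add.
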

\begin{proof}
Left for the reader.
\end{proof}
\begin{conc}
In this paper, we have studied topological structures on the family of equi-continuous mappings between a topological space and a uniform space. Important properties such as splittingness, admissibility etc. are introduced for such spaces and their characterizations are provided using net-theory. We have shown that similar studies can be carried out for pseudo-dislocated equi-continuous mappings also. It will be interesting to investigate the existence of the greatest splitting topology for such spaces. At the same time, the effect of duality on the existence of the greatest splitting topology needs to be investigated.
\end{conc}
\bibliographystyle{amsplain}

\end{document}